\documentclass{amsart}

\usepackage{amssymb,stmaryrd,mathrsfs,dsfont,amsmath}
\usepackage{colonequals} 
\usepackage{tikz}
\usepackage{float}

\theoremstyle{plain}
\newtheorem{theorem}{Theorem}[section]
\newtheorem{lemma}[theorem]{Lemma}
\newtheorem{proposition}[theorem]{Proposition}
\newtheorem{corollary}[theorem]{Corollary}

\theoremstyle{definition}

\newtheorem{example}[theorem]{Example}

\newtheorem{remark}[theorem]{Remark}

\DeclareMathOperator{\N}{\mathbb{N}}
\DeclareMathOperator{\Z}{\mathbb{Z}}

\input xy

\usepackage{microtype}
\begin{document}
	
	
	\title[On the Independence Number of the Prime-Coprime Graph]{On the Independence Number of the Prime-Coprime Graph of a Finite Group}
			
\author[Ravi Ranjan, Shubh N. Singh, Surbhi Kumari, Shidra Jamil]{\bfseries Ravi Ranjan \and Shubh Narayan Singh \\
Surbhi Kumari \and  Shidra Jamil}
	
	\address{Department of Mathematics, Central University of South Bihar, Gaya--824236, Bihar, India}
	\email{raviranjan23@cusb.ac.in}
	\email{shubh@cub.ac.in}
	\email{18sept.surbhi@gmail.com}
	\email{shidrajamil136@gmail.com}

	\subjclass[2020]{05C25, 05C69}
	\keywords{Finite group, Prime-Coprime graph, Split graph, Independence number}
	

\begin{abstract}
The prime-coprime graph $\Theta(G)$ of a finite group $G$ is the simple graph with vertex set $G$, where two distinct elements are adjacent whenever the greatest common divisor of their orders is either $1$ or a prime. We characterize all finite groups $G$ for which $\Theta(G)$ is a split graph. We establish a general lower bound for the independence number of $\Theta(G)$ of an arbitrary finite group $G$. Moreover, we explicitly compute the independence number of $\Theta(G)$ for several distinguished families of finite groups, including cyclic, dihedral, dicyclic, and semidihedral groups.
\end{abstract}
	
\maketitle

\section{Introduction}
Graphs arising from finite groups have been extensively studied over the past two decades; see the survey in \cite{camron-ijgt22}. These graphs highlight the deep interplay between group theory and graph theory, 
allowing group-theoretic properties to be examined through graph invariants. Among recent constructions, the concept of prime–coprime graphs, introduced in 2021, has attracted considerable attention; see \cite{adhikari-nsjom22, hao-2022, li-2022, shubh-arxiv25, sehgal-jmcs21, sehgal-25, sehgal-ijpam2024, sehgal-jmcsc-21}.

\vspace{0.4mm}
Let $G$ be a finite group. The \emph{prime-coprime graph} of $G$, denoted by $\Theta(G)$, is the simple graph with vertex set $G$, where two distinct vertices are adjacent whenever the greatest common divisor of their orders is either $1$ or a prime \cite{adhikari-nsjom22}. This graph is also referred as the \emph{coprime order graph} in the literature; see \cite{hao-2022, li-2022}. Adhikari and Banerjee \cite{adhikari-nsjom22} characterized the completeness and Eulerian properties of $\Theta(G)$, and determined its domination number, diameter, and girth. For specific groups $G$, they further studied Hamiltonicity, planarity, vertex connectivity, and the signless Laplacian spectrum of $\Theta(G)$. Hao et al. \cite{hao-2022} classified finite groups $G$ for which $\Theta(G)$ is planar, and determined its vertex connectivity for finite cyclic, dihedral, and dicyclic groups. Li et al. proved that for any fixed integer $k\geq 1$, there are only finitely many finite groups $G$ such that $\Theta(G)$ has (non)orientable genus $k$ \cite{li-2022}. In~\cite{shubh-arxiv25}, the first two authors studied the Hamiltonicity and clique number of $\Theta(G)$ for finite cyclic, dihedral, and dicyclic groups.

\vspace{0.4mm}

The determination of the independence number of a graph is generally NP-hard. This graph invariant has been investigated for certain graphs associated with finite groups; see \cite{camron-gc20, hamm-ijgt21, jitender-bmmss21, ma-im18, jitender-ca21}. In this paper, we focus on the independence number of prime–coprime graphs of finite groups. The subsequent sections of this paper are organized as follows. Section 2 provides preliminary definitions and notation. In Section 3, we characterize all finite groups $G$ for which $\Theta(G)$ is a split graph. Furthermore, we establish a general lower bound for the independence number of $\Theta(G)$ of an arbitrary finite group $G$. Moreover, we explicitly compute the independence number of $\Theta(G)$ for the following four distinguished families of finite groups:
\begin{align*}
\text{Cyclic groups }\,\, \Z_n &= \langle a \, \colon  a^n = 1\rangle, \quad n\geq 1;\\
\text{Dihedral groups }\,\, D_{2n} &= \langle a, b \, \colon  a^n = 1,\,  b^2 = 1,\, bab^{-1} = a^{-1}\rangle, \quad n\geq 3;\\
\text{Dicyclic groups }\,\,Q_{4n} &= \langle a, b \, \colon  a^{2n} = 1,\,  b^4 = 1,\, bab^{-1} = a^{-1}\rangle, \quad n\geq 3;\\
\text{Semidihedral groups }\,\, SD_{8n} &= \langle a, b \, \colon  a^{4n} = 1,\,  b^2 = 1,\, bab^{-1} = a^{2n-1}\rangle, \quad n\geq 3.
\end{align*}


\section{Preliminaries and Notation}


\vspace{0.4mm}
The cardinality of a set $X$ is denoted by $|X|$. For sets $A$ and $B$, their disjoint union is denoted by $A\sqcup B$, and the set difference is denoted by $A\setminus B = \{x\in A \colon x\notin B\}$.
We denote by $\N$ the set of all positive integers. For any integers $a, b\in \N$, we denote by $(a,b)$ the greatest common divisor (gcd, for short) of $a$ and $b$. The \emph{canonical prime factorization} of an integer $n>1$ is a unique representation $n= p_1^{k_1}\cdots p_t^{k_t}$, where $p_1<\cdots < p_t$ are primes and $k_1, \ldots, k_t \in \N$. The number of distinct prime divisors of an integer $n>1$ is denoted by $\omega(n)$. A \emph{semiprime} is a composite integer that is the product of two (possibly equal) primes. For every positive integer $n$, we denote by $\varphi(n)$ Euler’s totient function of $n$. Unless stated otherwise, all groups considered are finite and nontrivial. Let $G$ be a group. For every element $g\in G$, we denote by $|g|$ the order of $g$, and by $\langle g \rangle$ the cyclic subgroup generated by $g$. The order of $G$ is denoted by $|G|$, and $\pi(G)$ denotes the set of all prime divisors of $|G|$. We set $P(G) = \bigl\{g\in G\ \colon |g| \in \pi(G)\cup \{1\}\bigr\}$. The cyclic group of order $n$ is denoted by $\Z_n$.

\vspace{0.5mm}
Throughout the paper, all graphs are assumed to be simple. Let $\Gamma$ be a graph. We denote by $V(\Gamma)$ (resp.~ $E(\Gamma)$) the set of all vertices (resp.~ edges) of $\Gamma$. For any two distinct vertices $u,v \in V(\Gamma)$, we denote by $uv$ (or $vu$) the edge of $\Gamma$ with endpoints $u$ and $v$. A \emph{dominating vertex} of $\Gamma$ is a
vertex that is adjacent to every other vertex of $\Gamma$. For every positive integer $m$, we use the notation $m\Gamma$ to denote the disjoint union of $m$ copies of $\Gamma$. For any vertex-disjoint subgraphs $\Gamma_1$ and $\Gamma_2$ of $\Gamma$, we write $\Gamma_1 \sim \Gamma_2$ if every vertex of $\Gamma_1$ is adjacent to every vertex of $\Gamma_2$ in $\Gamma$. We denote by $K_n$ the complete graph on $n$ vertices, and by $E_n$ the empty graph on $n$ vertices. Let $S$ be a nonempty subset of $V(\Gamma)$. The subgraph of $\Gamma$ induced by $S$ is denoted by $\Gamma[S]$, and $\Gamma - S$ denotes the graph obtained by removing the vertices of $S$ from $\Gamma$. The set $S$ is called a \emph{clique} (resp.~\emph{independent set}) of $\Gamma$ if $\Gamma[S]$ is complete (resp.~ empty). An independent set of $\Gamma$ is said to be \emph{maximal} if it is inclusion-maximal, and \emph{maximum} if it has the maximum cardinality among all independent sets of $\Gamma$. The \emph{independence number} of $\Gamma$, denoted by $\alpha(\Gamma)$, is the cardinality of a maximum independent set of $\Gamma$. We write $\Gamma_1 \cong \Gamma_2$ whenever graphs $\Gamma_1$ and $\Gamma_2$ are isomorphic. A graph $\Gamma$ is called a \emph{split} graph if its vertex set $V(\Gamma)$ admits a partition into two disjoint nonempty subsets $K$ and $I$ such that $K$ forms a clique and $I$ constitutes an independent set. The pair $(C, I)$ is then referred to as a \emph{split partition} of $\Gamma$. Moreover, if every vertex of $K$ is adjacent to every vertex of $I$, then $\Gamma$ is said to be a \emph{complete split graph}. It was established in \cite{foldes-hammer-77} that a graph is split if and only if it contains no induced subgraph isomorphic to $C_4$, $C_5$, or $2K_2$. Let $H$ be a graph with vertex set $\{1,2,\dots,t\}$, and let 
$\mathcal{G}=\{\Gamma_1,\Gamma_2,\dots,\Gamma_t\}$ be a family of pairwise vertex‑disjoint graphs. The \emph{$H$‑join} of $\mathcal{G}$, denoted $H[\mathcal{G}]$, is the graph obtained by replacing each vertex $i\in V(H)$ with the graph $\Gamma_i$, and for each edge $ij\in E(H)$, adding all possible edges between $V(\Gamma_i)$ and $V(\Gamma_j)$. Note that the \emph{join} of vertex‑disjoint graphs $\Gamma_1$ and $\Gamma_2$, denoted by $\Gamma_1 \vee \Gamma_2$, is precisely the $H$‑join of $\{\Gamma_1,\Gamma_2\}$ when $H=K_2$.

\vspace{0.5mm}
For terminology and notation concerning graphs, groups, and number theory not introduced in this paper, we refer the reader to \cite{west-b01}, \cite{gallian-b21}, and \cite{burton-b07}, respectively. We finally recall the following results that will be used in the sequel.

\begin{theorem}(\cite[Theorem~3.5]{adhikari-nsjom22})\label{th_thetaG-complete-iff-G-no-compo-order}
Let $G$ be a finite group. Then $\Theta(G)$ is complete if and only if $G$ has no elements of composite order.
\end{theorem}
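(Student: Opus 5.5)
The plan is to prove the two directions separately, using only the adjacency rule: distinct $g,h\in G$ are adjacent exactly when $(|g|,|h|)$ equals $1$ or a prime.

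For sufficiency, assume $G$ has no element of composite order. Then every $g\in G$ has $|g|=1$ or $|g|$ prime. Take distinct $g,h\in G$. Since $(|g|,|h|)$ divides $|g|$, it is $1$ or the prime $|g|$. So $g$ and $h$ are adjacent, and $\Theta(G)$ is complete.

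For necessity, argue by contrapositive. Suppose $g\in G$ has composite order $m$. The aim is to find another element $h\ne g$ with $(m,|h|)=m$, which is composite.
\begin{itemize}
\item The natural choice is $h=g^{-1}$. It satisfies $|g^{-1}|=m$, and it is distinct from $g$ whenever $m>2$. This always holds, because the smallest composite number is $4$.
\item Then $(|g|,|g^{-1}|)=m$ is neither $1$ nor a prime, so $g$ and $g^{-1}$ are non-adjacent.
\item Hence $\Theta(G)$ is not complete.
\end{itemize}

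The only point needing care is that $g\ne g^{-1}$, and $m\ge 4$ settles it. Any other generator of $\langle g\rangle$ would also work, since $\varphi(m)\ge 2$ for $m\geq 3$. So I expect no real obstacle; this is essentially a direct verification.
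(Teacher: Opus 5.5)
Your proof is correct and complete. Sufficiency holds because $(|g|,|h|)$ divides $|g|$, which is $1$ or a prime. Necessity follows from the non-edge between $g$ and $g^{-1}$, which are distinct because a composite order is at least $4$.

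The paper does not prove this statement itself; it imports it from Adhikari and Banerjee. Your device of taking two distinct elements of the same composite order, justified by $\varphi(m)\geq 2$, is the same one the paper uses to produce non-edges in the proof of Theorem~\ref{th_complete-split-grph-class}.
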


\begin{remark}\cite[Theorem~3.1(b)]{adhikari-nsjom22}\label{re_dominating-vertex}
Let $G$ be a group. Then every element of $P(G)$ is a dominating vertex of $\Theta(G)$.
\end{remark}

\begin{theorem}(\cite[Theorem~4.4]{gallian-b21})\label{th_phi-d-elems-in-Zn}
If $d$ is a positive divisor of $n$, then the number of elements of order $d$ in a cyclic group of order $n$ is $\varphi(d)$.
\end{theorem}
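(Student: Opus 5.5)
This is the standard fact from \cite{gallian-b21} that a cyclic group of order $n$ has exactly $\varphi(d)$ elements of order $d$ for each divisor $d$ of $n$. I would prove it in three steps: first identify the unique subgroup of order $d$, then locate all elements of order $d$ inside it, and finally count the generators of that subgroup.

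Write $\Z_n=\langle a\rangle$ with $|a|=n$.

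\emph{Step 1 (order of a power).} For $0\le k<n$ we have $|a^k| = n/(n,k)$. Indeed, $(a^k)^m=1$ if and only if $n\mid km$. Dividing by $(n,k)$, this holds if and only if $\tfrac{n}{(n,k)}$ divides $\tfrac{k}{(n,k)}\,m$. Since $\tfrac{n}{(n,k)}$ and $\tfrac{k}{(n,k)}$ are coprime, this is equivalent to $\tfrac{n}{(n,k)}\mid m$.

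\emph{Step 2 (reduction to one subgroup).} Suppose $g=a^k$ has order $d$. Then $d=n/(n,k)$, so $(n,k)=n/d$. Hence $g=(a^{n/d})^{k/(n/d)}$ lies in $H=\langle a^{n/d}\rangle$, which is cyclic of order $d$ by Step 1.

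Conversely, every generator of $H$ has order $d$. So the elements of order $d$ in $\Z_n$ are exactly the generators of $H$.

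\emph{Step 3 (counting generators).} Write $b=a^{n/d}$, so $H=\langle b\rangle$ with $|b|=d$. For $0\le j<d$, Step 1 applied inside $H$ gives $|b^j|=d/(d,j)$. This equals $d$ precisely when $(d,j)=1$. The number of such $j$ in $\{0,\dots,d-1\}$ is $\varphi(d)$ by definition (for $d=1$ the only element is the identity, and $\varphi(1)=1$).

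There is no real obstacle here. The only point needing care is the coprimality step in Step 1, which yields the exact order formula $n/(n,k)$.
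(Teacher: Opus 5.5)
Your proof is correct. The paper gives no proof of its own and simply cites Gallian, whose argument follows the same route: the elements of order $d$ are exactly the generators of the subgroup $\langle a^{n/d}\rangle$ of order $d$, and $b^j$ generates a cyclic group $\langle b\rangle$ of order $d$ precisely when $(d,j)=1$. The only difference is that you derive $|a^k|=n/(n,k)$ and the containment in $\langle a^{n/d}\rangle$ directly rather than quoting the fundamental theorem of cyclic groups, so your version is self-contained.
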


\begin{theorem}(\cite[Theorem~7.3]{burton-b07})\label{th_value-of-phi-n}
If an integer $n>1$ has the canonical prime factorization $n = p_1^{k_1} p_2^{k_2} \cdots p_r^{k_r}$, then 
\[\varphi(n) = \bigl(p_1^{k_1}-p_1^{k_1-1} \bigr) \bigl(p_2^{k_2}-p_2^{k_2-1}\bigr) \cdots \bigl(p_r^{k_r}-p_r^{k_r-1}\bigr).\] 
\end{theorem}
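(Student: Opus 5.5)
The statement to prove is Theorem~\ref{th_value-of-phi-n}, the product formula for Euler's totient. I would derive it from two facts: the value of $\varphi$ on prime powers, and the multiplicativity of $\varphi$ on coprime arguments. Combined with the canonical prime factorization of $n$, these give the formula immediately.

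For the prime-power case, the integers in $\{1,\dots,p^k\}$ that are not coprime to $p^k$ are exactly the multiples of $p$. There are $p^{k-1}$ of them, so $\varphi(p^k)=p^k-p^{k-1}$.

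For multiplicativity, I would use group theory, in the spirit of the paper. By Theorem~\ref{th_phi-d-elems-in-Zn}, $\varphi(m)$ is the number of generators (elements of order $m$) of $\Z_m$. Take coprime $m$ and $m'$. By the Chinese remainder theorem, $\Z_{mm'}\cong \Z_m\times\Z_{m'}$. An element $(x,y)$ has order $\operatorname{lcm}(|x|,|y|)$. Since $|x|$ divides $m$, $|y|$ divides $m'$, and $(m,m')=1$, this order equals $mm'$ exactly when $|x|=m$ and $|y|=m'$. Counting these pairs gives $\varphi(mm')=\varphi(m)\varphi(m')$.

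Finally, I would induct on the number $r$ of distinct prime factors of $n=p_1^{k_1}\cdots p_r^{k_r}$. Since $p_r^{k_r}$ is coprime to $p_1^{k_1}\cdots p_{r-1}^{k_{r-1}}$, multiplicativity splits off that factor, and the prime-power formula evaluates it. This yields the stated product. The only step that needs any care is the multiplicativity argument, specifically the lcm characterization of the order of $(x,y)$; everything else is routine.
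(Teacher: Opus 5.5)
Your proof is correct, and it follows the skeleton of the standard proof. The paper does not prove this statement; it quotes it from Burton. Burton's proof uses the same three steps you do: $\varphi(p^k)=p^k-p^{k-1}$ by counting multiples of $p$ (his Theorem~7.1), multiplicativity of $\varphi$ (his Theorem~7.2), and induction on the number $r$ of distinct prime factors.

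The one real difference is how multiplicativity is proved. Burton uses elementary counting. He writes $1,\dots,mm'$ in an array with $m$ columns. Since $(qm+r,m)=(r,m)$, exactly $\varphi(m)$ columns consist of integers coprime to $m$. Each column $r, m+r,\dots,(m'-1)m+r$ is a complete residue system modulo $m'$, so it contains exactly $\varphi(m')$ integers coprime to $m'$.

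You instead count the elements of order $mm'$ in $\Z_{mm'}\cong\Z_m\times\Z_{m'}$, using the lcm formula for orders in a direct product. Your version is shorter and fits the group-theoretic language of the paper. The cost is that it imports the Chinese remainder isomorphism and Theorem~\ref{th_phi-d-elems-in-Zn}. The latter is not circular, because you only need the case $d=m$: the generators of $\Z_m$ are the residues coprime to $m$, which is essentially the definition of $\varphi$. Burton's argument stays entirely within elementary number theory.
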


\section{Main Results}
In this section, we characterize all finite groups $G$ for which $\Theta(G)$ is a split graph. Furthermore, we establish a general lower bound for the independence number $\alpha\bigl(\Theta(G)\bigr)$ of an arbitrary finite group $G$. Moreover, we explicitly compute the independence number $\alpha\bigl(\Theta(G)\bigr)$ for the four distinguished families of finite groups, namely cyclic, dihedral, dicyclic, and semidihedral groups.

\vspace{0.5mm}

Our first main result is the following theorem, which characterizes all finite groups $G$ for which $\Theta(G)$ is a split graph.

\begin{theorem}\label{th_complete-split-grph-class}
Let $G$ be a group. Then $\Theta(G)$ is a split graph if and only if one of the following conditions holds:
	\begin{enumerate}
		\item[\rm(i)] $G\setminus P(G) = \varnothing$.
		\item[\rm(ii)] $G\setminus P(G)$ consists exclusively of elements of order $p^k$, where $p$ is a fixed prime and $k\geq 2$.
		\item[\rm(iii)]  $G\setminus P(G)$ consists exclusively of elements of order $pq$, where $p$ and $q$ are fixed distinct primes. 
		\end{enumerate}
\end{theorem}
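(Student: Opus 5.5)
The plan is to show that $\Theta(G)$ is split exactly when $S\colonequals G\setminus P(G)$ is an independent set, and then to translate independence of $S$ into conditions (i)--(iii). Two facts drive everything. By Remark~\ref{re_dominating-vertex}, every element of $P(G)$ is a dominating vertex, so $P(G)$ is a clique. Every element of $S$ has composite order. In particular, two distinct elements of $S$ of the \emph{same} order $m$ are nonadjacent, because their gcd is the composite number $m$.

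\emph{Sufficiency.} In case (i), $\Theta(G)$ is complete by Theorem~\ref{th_thetaG-complete-iff-G-no-compo-order}. Since $G$ is nontrivial, $\bigl(G\setminus\{g\},\{g\}\bigr)$ is a split partition for any $g\in G$. In cases (ii) and (iii) we have $S\neq\varnothing$, and $P(G)\ni 1$ is nonempty. We show $S$ is independent:
\begin{itemize}
\item in case (ii), any two orders $p^k,p^l$ with $k,l\ge 2$ have gcd divisible by $p^2$;
\item in case (iii), any two orders are $pq$, with gcd $pq$.
\end{itemize}
Neither gcd is $1$ or a prime, so $S$ is independent. Hence $\bigl(P(G),S\bigr)$ is a split partition.

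\emph{Necessity, step one: $S$ is independent.} Suppose $\Theta(G)$ is split and $x,y\in S$ are adjacent. Then $|x|\neq|y|$ by the remark above, so $x$ and $y$ are distinct.
\begin{itemize}
\item Since $|x|,|y|$ are composite, $|x|,|y|>2$, so $x\neq x^{-1}$ and $y\neq y^{-1}$.
\item Since $|x^{\pm1}|=|x|\ne|y|=|y^{\pm1}|$, the elements $x,x^{-1},y,y^{-1}$ are four distinct vertices.
\item Every pair drawn from $\{x,x^{-1}\}\times\{y,y^{-1}\}$ has the same gcd of orders as $(x,y)$, so all four such pairs are adjacent.
\item The pairs $\{x,x^{-1}\}$ and $\{y,y^{-1}\}$ are nonadjacent, by the equal-order remark.
\end{itemize}
Thus $x,y,x^{-1},y^{-1}$ induce a $C_4$. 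This contradicts the characterization of split graphs in \cite{foldes-hammer-77}, so $S$ must be independent. The step needing the most care is exactly this distinctness check; the rest is arithmetic.

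\emph{Necessity, step two: independence forces (i)--(iii).} Assume $S\ne\varnothing$ is independent. We use that $S$ is closed under taking powers of composite order. Let $x\in S$ have order $m$ with two distinct prime divisors $p,q$, and suppose $m\neq pq$.
\begin{itemize}
\item If $p^2\mid m$, then suitable powers of $x$ have orders $p^2$ and $pq$. These are distinct elements with gcd $p$, which is a prime, so they are adjacent.
\item If some third prime $r$ divides $m$, then powers of $x$ have orders $pq$ and $pr$, with gcd $p$, again adjacent.
\end{itemize}
Either way independence fails, so every order in $S$ is a prime power $p^k$ with $k\ge 2$, or a product $pq$ of two distinct primes. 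Finally we compare two elements of $S$, of orders $a\neq b$, and check that the gcd is $1$ or a prime in each mixed situation:
\begin{itemize}
\item $p^k$ and $q^l$ with $p\neq q$ give gcd $1$;
\item $p^k$ and $qr$ give gcd at most one prime;
\item $pq$ and $p'q'$ with $\{p,q\}\neq\{p',q'\}$ give gcd $1$ or a prime.
\end{itemize}
Each case contradicts independence. Hence all orders in $S$ are powers of a single prime $p$ with exponent at least $2$, which is (ii), or all equal a single $pq$, which is (iii).
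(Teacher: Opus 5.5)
Your proof is correct, but it is organized differently from the paper's, and your route actually closes a gap in the paper's necessity argument.

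\textbf{The paper's route.} The paper never isolates the fact that $S=G\setminus P(G)$ must be independent. It fixes a single $g\in S$ and works inside $\langle g\rangle$. Using $\varphi(d)\ge 2$, it picks two elements of each of two suitable orders: $p_ip_j$ and $p_jp_\ell$ when $\omega(|g|)\ge 3$, or $p^a$ and $pq$ when $|g|=p^aq^b$ with $a\ge 2$. This gives a forbidden four-vertex induced subgraph. These configurations are in fact induced $C_4$'s, not $2K_2$'s as the paper states; the contradiction survives because $C_4$ is also forbidden. The paper then declares (ii) or (iii) according as $\omega(|g|)=1$ or $2$.

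\textbf{The gap.} That argument only constrains the order of the one chosen element. Nothing in it shows that all elements of $S$ are powers of the \emph{same} prime, or all of the \emph{same} order $pq$, or that prime-power and $pq$ orders cannot coexist in $S$. For instance, in $Q_{12}$ the composite orders are $4$ and $6$. Choosing $g$ of order $4$, the paper's reasoning ``concludes'' (ii), which is false there, and the splitness hypothesis is never brought to bear.

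\textbf{What your route buys.} Your Step~1 turns any edge $xy$ inside $S$ into the induced $C_4$ on $x,y,x^{-1},y^{-1}$. That is exactly the ingredient that controls interactions between different elements of $S$. It also reduces Step~2 to an arithmetic check needing only one adjacent pair, rather than two elements of each order. In effect, the paper's use of $\varphi(d)\ge 2$ is your inverse-doubling trick specialized to particular orders inside one cyclic subgroup. Your version applies to arbitrary pairs and yields the cleaner intermediate fact that $\Theta(G)$ is split if and only if $G\setminus P(G)$ is independent. The complete-split corollary follows from this immediately.

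The only thing you leave implicit is the harmless relabelling $p\leftrightarrow q$ in the case $q^2\mid m$.
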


\begin{proof}[\textbf{Proof}]
Assume that $\Theta(G)$ is a split graph. If $\Theta(G)$ is a complete graph, then $P(G) = G$ by Theorem~\ref{th_thetaG-complete-iff-G-no-compo-order}, and hence the condition \rm(i) holds. Suppose that $\Theta(G)$ is not complete. By Theorem~\ref{th_thetaG-complete-iff-G-no-compo-order}, it follows that $G\setminus P(G) \neq \varnothing$. Choose $g\in G\setminus P(G)$. Note that $|g|$ is composite. We claim $\omega(|g|)\leq 2$. Suppose to the contrary that $\omega(|g|)\geq 3$. Then there exist distinct primes $p_i, p_j, p_{\ell}$ dividing $|g|$. Therefore $G$ contains at least $\varphi(p_ip_j)$ elements of order $p_ip_j$, and at least $\varphi(p_jp_{\ell})$ elements of order $p_jp_{\ell}$. Observe that $\varphi(p_ip_j), \varphi(p_jp_{\ell})\geq 2$, and choose $u,v,x,y\in G$ with $|u| = |v| = p_ip_j$ and $|x| = |y| = p_jp_{\ell}$. 
Then the subset $\{u,v,x,y\}$ induces a subgraph of $\Theta(G)$ isomorphic to $2K_2$. This contradicts the assumption that $\Theta(G)$ is split. Hence $\omega(|g|)\leq 2$. Condition \rm(ii) holds immediately if $\omega(|g|)= 1$. Finally, suppose that $\omega(|g|)= 2$. We claim that $|g| = pq$ for distinct primes $p, q$. Suppose to the contrary that $|g|=p^{a}q^{b}$ with $a+b\geq 3$. Without loss of generality, assume that $a\geq 2$. Then $G$ contains at least $\phi(p^{a})$ elements of order $p^{a}$, and at least $\phi(pq)$ elements of order $pq$. Observe that $\phi(p^{a}), \phi(pq) \geq 2$, and choose $u,v,x,y\in G$ with $|u| = |v| = p^a$ and $|x| = |y| = pq$. Then the subset $\{u,v,x,y\}$ induces a subgraph of $\Theta(G)$ isomorphic to $2K_2$. This contradicts the assumption that $\Theta(G)$ is split. Hence $|g| = pq$ for distinct primes $p,q$, and thus the condition \rm(iii) holds. 

\vspace{0.4mm}
 
Conversely, assume that one of the three conditions holds. If $G\setminus P(G) = \varnothing$, then by Theorem~\ref{th_thetaG-complete-iff-G-no-compo-order}, the graph $\Theta(G)$ is complete. Therefore $\Theta(G)$ is split.

\vspace{0.4mm}
Suppose $G\setminus P(G) \neq\varnothing$. We consider the two distinct cases separately.

\vspace{0.5mm}
\noindent \textbf{Case 1:} Suppose that every element of $G\setminus P(G)$ has order $p^k$ for some fixed prime $p$ and $k\geq 2$. Then any two distinct $g,h \in G\setminus P(G)$ are non-adjacent in $\Theta(G)$, since $(|g|,|h|)$ is composite. Therefore $G \setminus P(G)$ is an independent set of $\Theta(G)$.

\vspace{0.5mm}
\noindent \textbf{Case 2:} Suppose that every element of $G\setminus P(G)$ has order $pq$ for fixed distinct primes $p,q$. Then any two distinct $g,h \in G\setminus P(G)$ are non-adjacent in $\Theta(G)$, since $(|g|,|h|)$ is composite.
Therefore $G \setminus P(G)$ is an independent set of $\Theta(G)$.

\vspace{0.5mm}
In either case, $G \setminus P(G)$ is an independent set of $\Theta(G)$. By Remark~\ref{re_dominating-vertex}, $P(G)$ is a clique of $\Theta(G)$. Hence $\Theta(G)$ is split.	
\end{proof}

\begin{corollary}\label{cor_split-implies-complete-split}
Let $G$ be a group. If $\Theta(G)$ is a split graph, then $\Theta(G)$ is a complete split graph.
\end{corollary}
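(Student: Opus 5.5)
The plan is to read the corollary off the proof of Theorem~\ref{th_complete-split-grph-class}, where the split partition is given explicitly. Assume $\Theta(G)$ is split. By Theorem~\ref{th_complete-split-grph-class}, one of conditions (i), (ii), (iii) holds.

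First I handle the cases (ii) and (iii), where $G\setminus P(G)\neq\varnothing$. The converse direction of the proof of Theorem~\ref{th_complete-split-grph-class} already shows that $G\setminus P(G)$ is an independent set. In case (ii) the gcd of two orders $p^a,p^b$ with $a,b\ge 2$ is $p^{\min(a,b)}$, which is composite. In case (iii) the gcd is $pq$, again composite. The identity lies in $P(G)$, so $P(G)$ is nonempty. By Remark~\ref{re_dominating-vertex}, every element of $P(G)$ is a dominating vertex. Hence $P(G)$ is a clique, and every vertex of $P(G)$ is adjacent to every vertex of $G\setminus P(G)$. So $(P(G),\,G\setminus P(G))$ is a split partition in which the clique part is completely joined to the independent part. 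This is exactly the definition of a complete split graph.

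Next I handle case (i), where $P(G)=G$ and $\Theta(G)$ is complete by Theorem~\ref{th_thetaG-complete-iff-G-no-compo-order}. The definition requires both parts of the partition to be nonempty. So I take $I=\{g\}$ for a single vertex $g$ and $K=G\setminus\{g\}$. Since $\Theta(G)$ is complete, $K$ is a clique, $I$ is trivially independent, and every vertex of $K$ is adjacent to $g$. This needs $|G|\ge 2$, which holds because groups are assumed nontrivial.

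I expect no real obstacle. The only point needing care is this degenerate complete case, where the natural partition $(P(G),\varnothing)$ is not allowed.
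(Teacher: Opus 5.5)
Your proposal is correct and follows the paper's proof. In the non-complete case you take the split partition $\bigl(P(G),\,G\setminus P(G)\bigr)$ from Theorem~\ref{th_complete-split-grph-class} together with the dominating vertices of Remark~\ref{re_dominating-vertex}, and in the complete case you use Theorem~\ref{th_thetaG-complete-iff-G-no-compo-order}, exactly as the paper does. Your explicit partition $\bigl(G\setminus\{g\},\{g\}\bigr)$ for the complete case fills in a detail the paper leaves implicit, since its definition of a split graph requires both parts to be nonempty.
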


\begin{proof}[\textbf{Proof}]
If $G\setminus P(G) = \varnothing$, 
then by Theorem~\ref{th_thetaG-complete-iff-G-no-compo-order}, $\Theta(G)$ is complete. Therefore $\Theta(G)$ is complete split.

\vspace{0.4mm}
Suppose $G\setminus P(G) \neq \varnothing$. Since $\Theta(G)$ is split, Theorem~\ref{th_complete-split-grph-class} provides the split partition $\bigl(P(G),\,  G\setminus P(G)\bigr)$ of $\Theta(G)$. By Remark~\ref{re_dominating-vertex}, $P(G)$ is a clique and every element of $P(G)$ 
is dominating in $\Theta(G)$. Hence $\Theta(G)$ is complete split.
\end{proof}


\begin{corollary}\label{cor_ind-no-of-split-graph}
Let $G$ be a group. If $\Theta(G)$ is a split graph, then
\begin{equation*}
\alpha\bigl(\Theta(G)\bigr) =
	\begin{cases}
		1 & \text{if $G\setminus P(G) = \varnothing$},\\
		|G|- |P(G)| & \text{if $G\setminus P(G)\neq \varnothing$}.
	\end{cases}
\end{equation*}
 \end{corollary}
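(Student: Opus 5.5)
The plan is to split into the two cases of the statement and, in each, squeeze $\alpha\bigl(\Theta(G)\bigr)$ between a lower bound coming from an explicit independent set and an upper bound coming from the dominating vertices of Remark~\ref{re_dominating-vertex}.

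If $G\setminus P(G)=\varnothing$, then $\Theta(G)$ is complete by Theorem~\ref{th_thetaG-complete-iff-G-no-compo-order}. Every independent set of a complete graph has at most one vertex, and $G$ is nonempty, so $\alpha\bigl(\Theta(G)\bigr)=1$.

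Now suppose $G\setminus P(G)\neq\varnothing$. Since $\Theta(G)$ is split, Theorem~\ref{th_complete-split-grph-class} puts us in case (ii) or (iii). The converse part of its proof shows that in either case any two distinct elements of $G\setminus P(G)$ have composite gcd of orders. Hence $G\setminus P(G)$ is an independent set, which gives the lower bound $\alpha\bigl(\Theta(G)\bigr)\geq |G|-|P(G)|$.

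For the upper bound, let $S$ be any independent set. If $S$ contains some element $x\in P(G)$, then $x$ is a dominating vertex by Remark~\ref{re_dominating-vertex}, so $x$ is adjacent to every other vertex, and independence forces $S=\{x\}$. Thus either $|S|=1\leq |G|-|P(G)|$, which holds because $G\setminus P(G)$ is nonempty, or $S\subseteq G\setminus P(G)$, so $|S|\leq |G|-|P(G)|$. Combining the two bounds gives equality.

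There is no real obstacle. The one point needing care is ruling out a maximum independent set that meets $P(G)$; the dominating-vertex observation handles it, since any such set is a single vertex. Alternatively, one could invoke Corollary~\ref{cor_split-implies-complete-split}, which says the graph is the join of the clique $P(G)$ with the independent set $G\setminus P(G)$, and read off $\alpha$ directly from that structure.
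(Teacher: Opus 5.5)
Your proof is correct and follows the paper's route. The complete case gives $\alpha\bigl(\Theta(G)\bigr)=1$; otherwise $G\setminus P(G)$ is independent and every vertex of $P(G)$ is dominating, which is the complete-split structure of Corollary~\ref{cor_split-implies-complete-split} that the paper invokes, and your observation that any independent set meeting $P(G)$ is a singleton just makes explicit the step the paper compresses into ``whence $G\setminus P(G)$ forms the maximum independent set.''
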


\begin{proof}[\textbf{Proof}]
If $G\setminus P(G) = \varnothing$, then by Theorem~\ref{th_thetaG-complete-iff-G-no-compo-order}, the graph $\Theta(G)$ is complete. Therefore $\alpha\bigl(\Theta(G)\bigr) = 1$. 

\vspace{0.4mm}
Suppose $G\setminus P(G)\neq \varnothing$. Since $\Theta(G)$ is split, Theorem~\ref{th_complete-split-grph-class} provides the split partition $\bigl(P(G),\, G\setminus P(G)\bigr)$ of $\Theta(G)$. By Corollary~\ref{cor_split-implies-complete-split}, $\Theta(G)$ is complete split, whence $G \setminus P(G)$ forms the maximum independent set. Therefore $\alpha\bigl(\Theta(G)\bigr) = |G\setminus P(G)| = |G|- |P(G)|$.
\end{proof}


In the remainder of the paper, we assume that $n$ is a composite integer and that $G$ denotes a group of order $n$. For each $n$, we define 
\[SP(n) = \bigl\{d\in \N \ \colon d \text{ is a semiprime divisor of } n\bigr\}.\]

Furthermore, for each $d\in SP(n)$, we define \[I_d(G)=\bigl\{g\in G\ \colon d \text{ divides } |g|\bigr\}.\]


\begin{lemma}\label{le_Id-maximal-set}
Let $d\in SP(n)$ such that $I_d(G) \neq \varnothing$. Then $I_d(G)$ forms a maximal independent set of $\Theta(G)$.
\end{lemma}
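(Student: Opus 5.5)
Two things must be shown: that $I_d(G)$ is independent in $\Theta(G)$, and that no vertex outside $I_d(G)$ can be added while preserving independence. Throughout, $d\in SP(n)$ is a semiprime, so $d=pq$ for primes $p,q$, possibly with $p=q$.

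\textbf{Independence.} Let $g,h\in I_d(G)$ be distinct. Since $d$ divides both $|g|$ and $|h|$, it divides $(|g|,|h|)$. A multiple of a semiprime is neither $1$ nor a prime, so $(|g|,|h|)$ is composite. Hence $g$ and $h$ are non-adjacent in $\Theta(G)$, and $I_d(G)$ is an independent set. It is nonempty by hypothesis.

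\textbf{Maximality.} Take $x\in G\setminus I_d(G)$, so $d\nmid |x|$. I will produce some $g\in I_d(G)$ with $(|g|,|x|)\in\{1\}\cup\{\text{primes}\}$, which makes $x$ adjacent to $g$ (note $g\neq x$ automatically). The natural choice of $g$ is an element of order exactly $d$. This is available: pick $h\in I_d(G)$, which exists since $I_d(G)\neq\varnothing$. As $d\mid |h|$, Theorem~\ref{th_phi-d-elems-in-Zn} applied to the cyclic group $\langle h\rangle$ gives an element $g\in\langle h\rangle$ with $|g|=d$, and clearly $g\in I_d(G)$.

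It remains to check that $(d,|x|)$ is $1$ or a prime. The divisors of $d$ are $1$, $p$, $q$ and $d$ in the case $p\neq q$, and $1$, $p$ and $p^2$ in the case $p=q$. Since $(d,|x|)$ divides $d$ but $d\nmid |x|$, we have $(d,|x|)\neq d$. So $(d,|x|)$ is $1$ or a prime, and $x$ is adjacent to $g$. Therefore no vertex outside $I_d(G)$ can be added to it, and $I_d(G)$ is a maximal independent set.

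The only point needing care is ensuring an element of order exactly $d$ lies in $I_d(G)$. This is handled by passing to the cyclic subgroup generated by an element of $I_d(G)$, which is why the hypothesis $I_d(G)\neq\varnothing$ is needed.
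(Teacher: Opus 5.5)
Your proof is correct and follows essentially the same route as the paper. Independence comes from $d$ dividing $(|g|,|h|)$, and maximality comes from showing that any $x\notin I_d(G)$ is adjacent to an element of order exactly $d$, because $(d,|x|)$ is a proper divisor of $d$; you also supply two points the paper leaves implicit, namely that such an element exists in $I_d(G)$ (via the cyclic subgroup $\langle h\rangle$) and why $(d,|x|)$ must be $1$ or a prime.
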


\begin{proof}[\textbf{Proof}]
To show first that $I_d(G)$ is an independent set of $\Theta(G)$, let $g,h\in I_d(G)$ with $g\neq h$. Then, by definition of $I_d(G)$, the semiprime $d$ divides both $|g|$ and $|h|$. Therefore $d$ divides $(|g|, |h|)$, and so $(|g|, |h|)$ is composite. Consequently, no edge of $\Theta(G)$ joins $g$ and $h$, and hence $I_d(G)$ is an independent set of $\Theta(G)$.

\vspace{0.5mm}
Suppose to the contrary that $I_{d}(G)$ is not maximal. Then there exists $x\in G\setminus I_{d}(G)$ such that $I_{d}(G)\cup \{x\}$ is independent in $\Theta(G)$. By definition of $I_{d}(G)$, we have $d\nmid |x|$. Therefore, for any $g\in I_{d}(G)$ with $|g| = d$, we have $(|x|, |g|)\in \{1\}\cup \pi(G)$. Hence $x$ is adjacent to such $g\in I_{d}(G)$ in $\Theta(G)$.
This contradicts the assumption that $I_{d}(G)\cup \{x\}$ is independent. Hence $I_{d}(G)$ is maximal.
\end{proof}


Lemma~\ref{le_Id-maximal-set} immediately provides the following proposition, which establishes a general lower bound for the independence number $\alpha\bigl(\Theta(G)\bigr)$ for any group $G$.

\begin{proposition}\label{pr_lower-bound-for-ind-num}
Let $G$ be a group of order $n$. Then 
	\[\alpha\bigl(\Theta(G)\bigr) \geq \max\bigl\{|I_d(G)|\colon d\in SP(n)\bigr\}.\]
\end{proposition}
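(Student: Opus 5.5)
The bound follows from Lemma~\ref{le_Id-maximal-set} together with the definition of $\alpha\bigl(\Theta(G)\bigr)$ as the largest size of an independent set. The plan is to show that every set $I_d(G)$ with $d\in SP(n)$ is independent in $\Theta(G)$, whether or not it is empty. Then each $|I_d(G)|$ is at most $\alpha\bigl(\Theta(G)\bigr)$, and so is their maximum.

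First I will check that the right-hand side makes sense. Since $n$ is composite, it has a prime divisor $p$ and $n/p>1$. The product of $p$ with any prime divisor of $n/p$ is a semiprime dividing $n$, so $SP(n)\neq\varnothing$ and the maximum is over a nonempty finite set.

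Now fix $d\in SP(n)$ and split into two cases.
\begin{enumerate}
\item[\rm(a)] If $I_d(G)\neq\varnothing$, Lemma~\ref{le_Id-maximal-set} says $I_d(G)$ is a maximal, hence in particular independent, set of $\Theta(G)$. So $|I_d(G)|\le\alpha\bigl(\Theta(G)\bigr)$.
\item[\rm(b)] If $I_d(G)=\varnothing$, then $|I_d(G)|=0$. Any single vertex is an independent set, so $\alpha\bigl(\Theta(G)\bigr)\ge 1>0$.
\end{enumerate}
Alternatively, the first paragraph of the proof of Lemma~\ref{le_Id-maximal-set} shows independence of $I_d(G)$ with no nonemptiness hypothesis. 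That argument is that $d$ divides $(|g|,|h|)$, so this gcd is composite.

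Taking the maximum over $d\in SP(n)$ gives the stated inequality. There is no real obstacle here. The only points needing care are the empty case in \rm(b) and the nonemptiness of $SP(n)$, both of which are settled above.
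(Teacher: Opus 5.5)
Your proposal is correct and takes the same approach as the paper, which states that Lemma~\ref{le_Id-maximal-set} immediately gives the bound: each nonempty $I_d(G)$ is independent, so $|I_d(G)|\le\alpha\bigl(\Theta(G)\bigr)$. Your treatment of the empty case and your check that $SP(n)\neq\varnothing$ simply spell out details the paper leaves implicit.
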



In order to render the lower bound in Proposition~\ref{pr_lower-bound-for-ind-num} explicit for cyclic groups case, it is first necessary to determine $|I_d(\mathbb{Z}_{n})|$ for each $d \in SP(n)$. The following two results establish this computation.

\begin{lemma}\label{le_card-of-I_d}
Let $d\in SP(n)$. Then  
\[|I_{d}(\Z_{n})| = \sum_{\substack{d\mid \ell\\ \ell\mid n}}\varphi(\ell).\] 
\end{lemma}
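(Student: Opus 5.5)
The idea is to partition $I_d(\Z_n)$ according to the order of its elements and count each part with Theorem~\ref{th_phi-d-elems-in-Zn}. By Lagrange's theorem every element $g\in\Z_n$ has order dividing $n$. Hence
\[
\Z_n=\bigsqcup_{\ell\mid n}\{g\in \Z_n\colon |g|=\ell\}.
\]
Intersecting this disjoint decomposition with $I_d(\Z_n)$ keeps exactly those classes with $d\mid \ell$. This gives
\[
I_d(\Z_n)=\bigsqcup_{\substack{d\mid \ell\\ \ell\mid n}}\{g\in \Z_n\colon |g|=\ell\}.
\]

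Next I would take cardinalities. Since the union is disjoint, $|I_d(\Z_n)|$ is the sum of the sizes of the pieces. For each positive divisor $\ell$ of $n$, Theorem~\ref{th_phi-d-elems-in-Zn} says that $\Z_n$ contains exactly $\varphi(\ell)$ elements of order $\ell$. Substituting this into the sum yields the stated formula.

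No step is a real obstacle. The only point to check is that the index set is nonempty: since $d\in SP(n)$ divides $n$, the term $\ell=d$ occurs in the sum. Even without this, the identity would hold trivially with an empty sum. Nothing about $d$ being semiprime is used beyond $d\mid n$, so the same argument gives the formula for any divisor $d$ of $n$.
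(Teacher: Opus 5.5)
Your proof is correct and is essentially the paper's argument. Both partition $I_d(\Z_n)$ by element order $\ell$, with $d\mid \ell$ and $\ell\mid n$, and count each class as $\varphi(\ell)$ via Theorem~\ref{th_phi-d-elems-in-Zn}; your explicit disjoint-union formulation just makes the summation step a little more transparent.
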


\begin{proof}[\textbf{Proof}]
Recall that $I_{d}(\Z_{n})=\bigl\{g\in \Z_{n}\colon d \text{ divides } |g|\bigr\}$. 
Let $g\in I_{d}(\Z_{n})$ with $|g| = \ell$. Then $d\mid \ell$ and $\ell\mid n$. By Theorem~\ref{th_phi-d-elems-in-Zn}, the group $\Z_{n}$ contains exactly $\varphi(\ell)$ elements of order $\ell$. Consequently, the elements of $I_{d}(\Z_{n})$ are exactly those of order $\ell$ with $d\mid \ell$ and $\ell\mid n$. Hence, summing $\varphi(\ell)$ over all such divisors $\ell$ of $n$ with $d\mid \ell$ provides
$|I_{d}(\Z_{n})| = \sum_{\substack{d\mid \ell\\ \ell\mid n}}\varphi(\ell)$.
\end{proof}


\begin{proposition}\label{pr_explicit-formula-size-Id}
Let $n= p_1^{k_1}\cdots p_t^{k_t}$ be the canonical prime factorization of $n$, and $d\in SP(n)$. Then 
\[
|I_d(\mathbb{Z}_n)| =
\begin{cases}
	(p_i^{k_i} - p_i)\displaystyle\prod_{\substack{r=1 \\ r\neq i}}^{t} p_r^{k_r}, & \text{if $d = p_i^2$}, \\[0.5em]
	(p_i^{k_i} - 1)(p_j^{k_j} - 1)\displaystyle\prod_{\substack{r=1 \\ r\neq i,j}}^{t} p_r^{k_r}, & \text{if $d = p_i p_j$ with $i \neq j$}.
\end{cases}\]
\end{proposition}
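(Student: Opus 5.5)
Starting from Lemma~\ref{le_card-of-I_d}, the plan is to evaluate the divisor sum $\sum_{d\mid \ell,\ \ell\mid n}\varphi(\ell)$ by a complementary count. The key identity is $\sum_{\ell\mid n}\varphi(\ell)=n$, which follows from Theorem~\ref{th_phi-d-elems-in-Zn} because every element of $\Z_n$ has order dividing $n$.

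\emph{Case $d=p_ip_j$ with $i\neq j$.} Here $d\mid \ell$ holds exactly when $p_i\mid\ell$ and $p_j\mid \ell$.
\begin{itemize}
\item[(a)] Write $n=p_i^{k_i}p_j^{k_j}m$ with $(m,p_ip_j)=1$.
\item[(b)] Every divisor $\ell$ of $n$ factors uniquely as $\ell=p_i^{a}p_j^{b}e$ with $e\mid m$, $0\le a\le k_i$ and $0\le b\le k_j$.
\item[(c)] By multiplicativity of $\varphi$ (implicit in Theorem~\ref{th_value-of-phi-n}), the sum factors as
\[
\Bigl(\sum_{a=1}^{k_i}\varphi(p_i^{a})\Bigr)\Bigl(\sum_{b=1}^{k_j}\varphi(p_j^{b})\Bigr)\Bigl(\sum_{e\mid m}\varphi(e)\Bigr).
\]
\item[(d)] By Theorem~\ref{th_value-of-phi-n}, $\sum_{a=1}^{k}\varphi(p^a)=\sum_{a=1}^{k}(p^a-p^{a-1})$, which telescopes to $p^{k}-1$.
\item[(e)] The last factor equals $m=\prod_{r\neq i,j}p_r^{k_r}$, by the identity $\sum_{e\mid m}\varphi(e)=m$ applied to $\Z_m$.
\end{itemize}
Multiplying the three factors gives the second formula.

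\emph{Case $d=p_i^2$.} This requires $k_i\ge 2$, and the condition $d\mid\ell$ means $a\ge 2$. The same factorization gives
\[
\Bigl(\sum_{a=2}^{k_i}\varphi(p_i^{a})\Bigr)\cdot\prod_{r\neq i}p_r^{k_r}.
\]
The first factor telescopes to $p_i^{k_i}-p_i$, which gives the first formula.

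The only point needing care is justifying the factorization of the sum over $\ell$. The set of admissible $\ell$ is a product set of exponent choices, one per prime. The function $\varphi$ is multiplicative on coprime factors, which can be read off directly from the product form in Theorem~\ref{th_value-of-phi-n}. Together these let the sum over $\ell$ distribute into a product of sums over the separate prime powers. Everything else is a routine telescoping computation.
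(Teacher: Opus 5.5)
Your proposal is correct and follows essentially the same route as the paper: both use multiplicativity of $\varphi$ to split the sum from Lemma~\ref{le_card-of-I_d} into a prime-power factor that telescopes to $p_i^{k_i}-p_i$ or $(p_i^{k_i}-1)(p_j^{k_j}-1)$, times $\sum_{e\mid m}\varphi(e)=m$ (the paper cites Burton for this identity, while you derive it from Theorem~\ref{th_phi-d-elems-in-Zn}). One wording point: what you actually carry out is a direct factorization of the restricted divisor sum, not the ``complementary count'' your opening sentence announces.
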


\begin{proof}[\textbf{Proof}]
By Lemma~\ref{le_card-of-I_d}, we have $|I_d(\mathbb{Z}_n)| = \sum_{\substack{d\mid \ell\\ \ell\mid n}}\varphi(\ell)$. Let $\ell\in \N$ such that $d\mid \ell$ and $\ell\mid n$. Then $\ell$ can be uniquely expressed as $\ell = d_1 d_2$, where $d_{1}\mid p_{i}^{k_i} p_{j}^{k_j}$, $d_2 \mid \prod_{\substack{r=1\\r\neq i,j}}^{t}p_r^{k_r}$, and $(d_1, d_2) = 1$ (cf.~\cite[p.~108, Lemma]{burton-b07}). Furthermore, we have $\varphi(\ell) = \varphi(d_1 d_2) = \varphi(d_1)\varphi(d_2)$, since $\varphi$ is multiplicative. Since $d = p_ip_j$ for some $i,j\in \{1,\ldots, t\}$, we consider separately the two cases: 

\vspace{0.5mm}
\noindent\textbf{Case 1:} $i=j$. Then $d = p_{i}^{2}$. Since $d\mid d_1d_2$, the exponent of $p_i$ in $d_1$ must be at least $2$. It follows that $d_1 = p_i^{s}$, where $2 \leq s \leq k_{i}$. Therefore $\sum_{d_1 \mid p_{i}^{k_i}}\varphi(d_1) = \sum_{s=2}^{k_i}\varphi(p_i^{s}) = p_i^{k_i} - p_i$. Recall that $\varphi(\ell) = \varphi(d_1)\varphi(d_2)$. Therefore, 
\[\sum_{\ell\mid n}\varphi(\ell)= \sum_{\substack{d_{1}\mid p_{i}^{k_i} \\ d_2 \mid \prod_{\substack{r=1\\r\neq i}}^{t}p_r^{k_r}}} \varphi(d_1) \varphi(d_2) =\biggl(\sum_{d_1 \mid p_{i}^{k_i}}\varphi(d_1)\biggr) \biggl(\sum_{d_2\mid \prod_{\substack{r=1\\r\neq i}}^{t}p_r^{k_r}}\varphi(d_2)\biggr).\]
By \cite[Theorem~7.6]{burton-b07}, we have $\sum_{d_2\mid \prod_{\substack{r=1\\r\neq i}}^{t}p_r^{k_r}}\varphi(d_2) = \prod_{\substack{r=1\\r\neq i}}^{t}p_r^{k_r}$. It follows that

\[\sum_{\ell\mid n}\varphi(\ell)=(p_i^{k_i}-p_i)\prod_{\substack{r=1\\r\neq i}}^{t}p_r^{k_r}.\]
Hence, by Lemma~\ref{le_card-of-I_d}, we obtain
\[|I_{d}(\Z_{n})| = \sum_{\substack{d \mid \ell \\ \ell \mid n}} \varphi(\ell)= (p_i^{k_i}-p_i)\prod_{\substack{r=1\\r\neq i}}^{t}p_r^{k_r}.\]  

\vspace{0.5mm}
\noindent\textbf{Case 2:} $i\neq j$. Then $d = p_{i} p_{j}$, where $p_i, p_j$ are distinct. Since $d\mid d_1d_2$, the exponent of both $p_i$ and $p_j$ in $d_1$ must be at least $1$. It follows that $d_1 = p_i^{s}  p_j^{m}$, where $1 \leq s \leq k_i$ and $1 \leq m \leq k_j$. Therefore, since $\varphi(p_i^{s}  p_j^{m}) = \varphi(p_i^{s})\varphi(p_j^{m})$, we obtain
\[\sum_{d_1 \mid p_{i}^{k_i} p_{j}^{k_j}}\varphi(d_1)  =
\sum_{\substack{p_i^{s} \mid p_i^{k_i} \\ p_j^{m} \mid p_j^{k_j}}} \varphi(p_i^{s}) \varphi(p_j^{m})=\biggl(\sum_{s=1}^{k_i}\varphi(p_i^s)\biggr) \biggl(\sum_{m=1}^{k_j}\varphi(p_j^m)\biggr)= (p_i^{k_i} - 1) (p_j^{k_j} - 1).\]

Recall that $\varphi(\ell) = \varphi(d_1)\varphi(d_2)$. Thus,
\[\sum_{\ell\mid n}\varphi(\ell)= \sum_{\substack{d_{1}\mid p_{i}^{k_i} p_{j}^{k_j} \\ d_2 \mid \prod_{\substack{r=1\\r\neq i, j}}^{t}p_r^{k_r}}} \varphi(d_1) \varphi(d_2)
=\left(\sum_{d_1 \mid p_{i}^{k_i} p_{j}^{k_j}}\varphi(d_1)\right) \Biggl(\sum_{d_2\mid \prod_{\substack{r=1\\r\neq i,j}}^{t}p_r^{k_r}}\varphi(d_2)\Biggr).\]
 By \cite[Theorem~7.6]{burton-b07}, we have

 \[\sum_{d_2\mid \prod_{\substack{r=1\\r\neq i,j}}^{t}p_r^{k_r}}\varphi(d_2) = \prod_{\substack{r=1\\r\neq i,j}}^{t}p_r^{k_r}.\] It follows that 
\[\sum_{\ell\mid n}\varphi(\ell)= (p_i^{k_i} - 1) (p_j^{k_j} - 1)\prod_{\substack{r=1\\r\neq i,j}}^{t}p_r^{k_r}.\]

Hence, by Lemma~\ref{le_card-of-I_d}, we obtain
\[|I_{d}(\Z_{n})| = \sum_{\substack{d \mid \ell \\ \ell \mid n}} \varphi(\ell)=
(p_i^{k_i} - 1) (p_j^{k_j} - 1)\prod_{\substack{r=1\\r\neq i,j}}^{t}p_r^{k_r}.\]
\end{proof}


\begin{theorem}\label{th_ind-num-cyclic-group}
Let $n= p_1^{k_1}\cdots p_t^{k_t}$ be the canonical prime factorization of $n$. Then
\[
	\alpha\bigl(\Theta(\mathbb{Z}_n)\bigr) \;\geq\; \max\Biggl\{
	\,(p_i^{k_i} - p_i)   \prod_{\substack{r=1\\ r\neq i}}^t p_r^{k_r},\;
	(p_i^{k_i} - 1)(p_j^{k_j} - 1)\prod_{\substack{r=1\\r\neq i,j}}^t p_r^{k_r}
	\Biggr\},\]
	where the maximum is taken over all indices $i$ with $k_i \geq 2$ and all pairs $i \neq j$.

\vspace{0.2mm}
Moreover, if $k_1 = \cdots = k_t = 1$, then \[\alpha\!\bigl(\Theta(\mathbb{Z}_{n})\bigr)
\geq (p_{t-1}-1)(p_t-1) \prod_{r=1}^{t-2} p_r.\]
\end{theorem}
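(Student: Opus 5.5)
The first inequality will follow by combining Proposition~\ref{pr_lower-bound-for-ind-num} with Proposition~\ref{pr_explicit-formula-size-Id}, applied to $G=\Z_n$. First we describe $SP(n)$ in terms of the canonical factorization. A semiprime $d$ divides $n$ exactly when either $d=p_i^2$ with $k_i\geq 2$, or $d=p_ip_j$ with $i\neq j$. So the index set over which the maximum is taken in the statement (all $i$ with $k_i\geq 2$, and all pairs $i\neq j$) is precisely a parametrization of $SP(n)$. Since $n$ is composite, $SP(n)\neq\varnothing$.

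For each $d\in SP(n)$ we have $I_d(\Z_n)\neq\varnothing$, because $\Z_n$ contains an element of order $d$. Proposition~\ref{pr_lower-bound-for-ind-num} gives $\alpha(\Theta(\Z_n))\geq \max_{d\in SP(n)}|I_d(\Z_n)|$. Substituting the two explicit formulas of Proposition~\ref{pr_explicit-formula-size-Id} for the cases $d=p_i^2$ and $d=p_ip_j$ then yields the displayed maximum directly.

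For the squarefree case $k_1=\cdots=k_t=1$, every semiprime divisor has the form $p_ip_j$ with $i<j$, so $t\geq 2$. The corresponding value is
\[
(p_i-1)(p_j-1)\prod_{r\neq i,j}p_r \;=\; n\cdot\frac{(p_i-1)(p_j-1)}{p_ip_j}.
\]
Because the lower bound is a maximum, it suffices to exhibit the single choice $(i,j)=(t-1,t)$. Taking $d=p_{t-1}p_t\in SP(n)$ gives $\alpha(\Theta(\Z_n))\geq |I_d(\Z_n)|=(p_{t-1}-1)(p_t-1)\prod_{r=1}^{t-2}p_r$, which is the claimed bound.

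As a remark, this choice is in fact the maximizing one. The function $x\mapsto 1-1/x$ is increasing, so the product $(1-1/p_i)(1-1/p_j)$ is largest when $p_i,p_j$ are the two largest primes. This observation is not needed for the inequality itself.

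The only step requiring care is confirming that the index set in the statement matches $SP(n)$ exactly. In particular, $p_i^2$ is admissible only when $k_i\geq 2$, which keeps the first formula from being applied outside its range. Beyond that, the argument is a direct substitution.
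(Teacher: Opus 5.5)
Your proposal is correct and takes essentially the same approach as the paper: the first inequality is Proposition~\ref{pr_lower-bound-for-ind-num} combined with Proposition~\ref{pr_explicit-formula-size-Id}, where the index set in the statement is exactly $SP(n)$. In the squarefree case you only exhibit the witness $d=p_{t-1}p_t$, whereas the paper also shows, via monotonicity of $1-1/x$, that this choice attains the maximum; your shortcut is enough for the inequality, and the paper's extra step is the remark you already make.
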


\begin{proof}[\textbf{Proof}]
The first inequality follows immediately from  Propositions~\ref{pr_lower-bound-for-ind-num} and \ref{pr_explicit-formula-size-Id}. Next, assume $k_1 = \cdots = k_t = 1$. Then  \[
\alpha\bigl(\Theta(\mathbb{Z}_n)\bigr) \;\geq\; \max_{1\leq i < j\leq t}\biggl\{
\,(p_i - 1)(p_j - 1)\prod_{\substack{r=1\\ r\neq i,j}}^t p_r
\biggr\} = n\cdot \max_{1\leq i < j\leq t}\left\{\left(1- \frac{1}{p_i}\right) \left(1- \frac{1}{p_j}\right)\right\}.\]

Since $p_1 < p_2 < \cdots < p_t$, we have $1-\frac{1}{p_1} <  1-\frac{1}{p_2}< \cdots <1-\frac{1}{p_t}$. Therefore,  
\[\max_{1\leq i< j\leq t}\left\{ \left(1- \frac{1}{p_i}\right) \left(1- \frac{1}{p_j}\right)\right\} =  \left(1- \frac{1}{p_{t-1}}\right) \left(1- \frac{1}{p_t}\right).\] Hence, 
\[\alpha\bigl(\Theta(\mathbb{Z}_n)\bigr) \geq  n \left(1- \frac{1}{p_{t-1}}\right) \left(1- \frac{1}{p_t}\right)
= (p_{t-1}-1)(p_t-1) \prod_{r=1}^{t-2} p_r.\]
\end{proof}


The following two examples illustrate that the lower bound in Theorem~\ref{th_ind-num-cyclic-group} need not be tight in general.

\begin{example}
Consider the cyclic group $\Z_{900}$ of order $900=2^2\cdot 3^2 \cdot 5^2$. By Theorem~\ref{th_ind-num-cyclic-group}, we obtain  $\alpha\bigl(\Theta(\mathbb{Z}_n)\bigr)\geq 768$. Now consider 
\[I= \bigl\{g\in \Z_{900} \colon |g|\in\{30, 50, 60, 75, 90, 100, 125,  150, 180, 300, 450, 900\}\bigr\}.\] It is clear that $I$ is independent in $\Theta(\Z_{900})$, since $(|x|,|y|)$ is composite for any distinct $x, y \in I$. By applying Theorem~\ref{th_phi-d-elems-in-Zn} together with Theorem~\ref{th_value-of-phi-n}, we deduce $|I|=796$. Therefore,
$\alpha\bigl(\Theta(\Z_{900})\bigr)\geq 796 > 768$.
\end{example}


\begin{example}
	Consider the cyclic group $\Z_{1155}$ of order $1155= 3 \cdot 5\cdot 7\cdot 11$. By Theorem~\ref{th_ind-num-cyclic-group}, we obtain $\alpha\bigl(\Theta(\Z_{1155})\bigr) \geq 900$. Now consider 
	\[I = \bigl\{g\in \Z_{1155}\colon |g|\in \{105, 165, 231, 385, 1155\}\bigr\}.\] Clearly $I$ is an independent set of $\Theta(\Z_{1155})$, since $(|x|, |y|)$ is composite for any distinct $x, y\in I$. By applying Theorem~\ref{th_phi-d-elems-in-Zn} together with Theorem~\ref{th_value-of-phi-n}, we deduce $|I|=968$. Therefore,
	$\alpha\bigl(\Theta(\Z_{1155})\bigr)\geq 968 > 900$.
	\end{example}

In the sequel, we establish five results demonstrating the tightness of the lower bound for $\alpha\bigl(\Theta(\mathbb{Z}_n)\bigr)$, as stated in Theorem~\ref{th_ind-num-cyclic-group}, whenever $n$ has at most two prime divisors.

\begin{corollary}
Let $n = p^m$, where $p$ is a prime and $m\geq 2$. Then $\alpha\bigl(\Theta(\Z_n)\bigr) = n-p$. 
\end{corollary}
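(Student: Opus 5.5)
The plan is to reduce to the split-graph machinery from the start of this section. For $G=\Z_{p^m}$ with $m\geq 2$, every element has order $p^s$ with $0\leq s\leq m$. Hence $P(\Z_n)$ consists of the identity together with the elements of order $p$. Every element of $\Z_n\setminus P(\Z_n)$ has order $p^k$ with $k\geq 2$, for the single fixed prime $p$. This set is nonempty, since a generator has order $p^m$ with $m\geq 2$. Therefore condition (ii) of Theorem~\ref{th_complete-split-grph-class} holds, and $\Theta(\Z_n)$ is a split graph.

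Next, Corollary~\ref{cor_ind-no-of-split-graph} gives $\alpha\bigl(\Theta(\Z_n)\bigr)=|\Z_n|-|P(\Z_n)|$, because $\Z_n\setminus P(\Z_n)\neq\varnothing$. It remains to count $P(\Z_n)$. By Theorem~\ref{th_phi-d-elems-in-Zn}, $\Z_n$ has exactly $\varphi(1)=1$ element of order $1$ and exactly $\varphi(p)=p-1$ elements of order $p$. So $|P(\Z_n)|=p$, and $\alpha\bigl(\Theta(\Z_n)\bigr)=p^m-p=n-p$.

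As a consistency check, this value equals the lower bound of Theorem~\ref{th_ind-num-cyclic-group} with $t=1$, $d=p^2$ and empty product, namely $p^{m}-p$. This confirms the claimed tightness. I expect no genuine obstacle. The only point needing care is verifying that the hypotheses of the corollary are met, namely that the complement of $P(\Z_n)$ is nonempty, and this uses $m\geq 2$.
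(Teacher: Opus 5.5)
Your proof is correct and follows the paper's route: you count $|P(\Z_n)| = 1+\varphi(p) = p$ via Theorem~\ref{th_phi-d-elems-in-Zn} and then apply Corollary~\ref{cor_ind-no-of-split-graph}. You also check explicitly, via condition (ii) of Theorem~\ref{th_complete-split-grph-class}, that $\Theta(\Z_n)$ is split and that $\Z_n\setminus P(\Z_n)\neq\varnothing$; the paper's two-line proof uses these hypotheses without stating them.
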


\begin{proof}[\textbf{Proof}]
By Theorem~\ref{th_phi-d-elems-in-Zn}, $|P(\Z_n)| = 1 + \varphi(p) = p$. Hence, by Corollary~\ref{cor_ind-no-of-split-graph}, we obtain $\alpha\bigl(\Theta(\Z_n)\bigr) =  n-p$.
\end{proof}


\begin{corollary}
Let $n = pq$, where $p$ and $q$ are distinct primes. Then $\alpha\bigl(\Theta(\Z_n)\bigr) = (p-1)(q-1)$. 
\end{corollary}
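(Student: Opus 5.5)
The plan mirrors the preceding corollary for $n=p^m$: show that $\Theta(\Z_{pq})$ is a split graph, and then read off the independence number from Corollary~\ref{cor_ind-no-of-split-graph}. The whole argument reduces to counting $P(\Z_{pq})$, since condition (iii) of Theorem~\ref{th_complete-split-grph-class} applies directly.

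First, I determine the element orders in $\Z_{pq}$. Every element has order dividing $pq$, so its order lies in $\{1,p,q,pq\}$. Elements of order $1$, $p$ or $q$ belong to $P(\Z_{pq})$ by definition, since $\pi(\Z_{pq})=\{p,q\}$. Hence $\Z_{pq}\setminus P(\Z_{pq})$ consists exactly of the elements of order $pq$. By Theorem~\ref{th_phi-d-elems-in-Zn}, there are $\varphi(pq)=(p-1)(q-1)\geq 1$ of them, so this set is nonempty. Condition (iii) of Theorem~\ref{th_complete-split-grph-class} therefore holds with the fixed primes $p,q$, and $\Theta(\Z_{pq})$ is split.

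Next, I apply Corollary~\ref{cor_ind-no-of-split-graph} in the case $G\setminus P(G)\neq\varnothing$. This gives
\[\alpha\bigl(\Theta(\Z_{pq})\bigr)=|\Z_{pq}|-|P(\Z_{pq})|=|\Z_{pq}\setminus P(\Z_{pq})|=\varphi(pq).\]
By Theorem~\ref{th_value-of-phi-n}, $\varphi(pq)=(p-1)(q-1)$. As a cross-check, Theorem~\ref{th_phi-d-elems-in-Zn} gives $|P(\Z_{pq})|=1+(p-1)+(q-1)=p+q-1$, and indeed $pq-(p+q-1)=(p-1)(q-1)$.

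There is no real obstacle here. The only point requiring care is confirming that $G\setminus P(G)$ is nonempty, so that the second case of Corollary~\ref{cor_ind-no-of-split-graph} applies rather than the value $1$. Nonemptiness is immediate because $\varphi(pq)\geq 1$. For instance, when $pq=6$ there are $2$ elements of order $6$, giving $\alpha=2=(2-1)(3-1)$.
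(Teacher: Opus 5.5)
Your proof is correct and follows the paper's route: you compute $|\Z_{pq}\setminus P(\Z_{pq})|=\varphi(pq)$, equivalently $|P(\Z_{pq})|=p+q-1$, and apply Corollary~\ref{cor_ind-no-of-split-graph}. You also check explicitly, via condition (iii) of Theorem~\ref{th_complete-split-grph-class}, that $\Theta(\Z_{pq})$ is split with $G\setminus P(G)\neq\varnothing$; the paper's two-line proof leaves this hypothesis implicit.
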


\begin{proof}[\textbf{Proof}]
By Theorem~\ref{th_phi-d-elems-in-Zn}, $|P(\Z_n)| = 1 + \varphi(p) +\varphi(q) = p+q-1$. Hence, by Corollary~\ref{cor_ind-no-of-split-graph}, we obtain $\alpha\bigl(\Theta(\Z_n)\bigr) =  (p-1)(q-1)$.
\end{proof}

\begin{proposition}
Let $n= p_1p_2p_3$, where $p_1<p_2<p_3$ are primes. Then
	$\alpha\bigl(\Theta(\mathbb{Z}_{n})\bigr)= p_1 (p_2-1)(p_3-1)$.
\end{proposition}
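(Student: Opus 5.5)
The lower bound $\alpha\bigl(\Theta(\Z_n)\bigr)\geq p_1(p_2-1)(p_3-1)$ is exactly the second part of Theorem~\ref{th_ind-num-cyclic-group} with $t=3$, realized by the independent set $I_{p_2p_3}(\Z_n)$. So the work is the matching upper bound: every independent set $I$ of $\Theta(\Z_n)$ has $|I|\leq p_1(p_2-1)(p_3-1)$.

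We sort vertices by order. The possible orders are $1,p_1,p_2,p_3,p_1p_2,p_1p_3,p_2p_3,n$. By Remark~\ref{re_dominating-vertex}, elements of order $1$ or a prime are dominating. Hence if $I$ contains such an element and $|I|\geq 2$, we have a contradiction. We may therefore assume $|I|\geq 2$, so $I$ consists only of elements whose orders lie in $\{p_1p_2,\,p_1p_3,\,p_2p_3,\,n\}$.

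Next we use the gcd structure of these orders. Any two distinct semiprimes among them have prime gcd, so their elements are adjacent. Two elements of the same semiprime order, or one of them of order $n$, have composite gcd and are nonadjacent. Consequently the orders appearing in $I$ include at most one of the three semiprimes $p_ip_j$. This gives
\[
I\subseteq\{g:|g|=p_ip_j\}\cup\{g:|g|=n\}=I_{p_ip_j}(\Z_n)
\]
for some $i<j$. Then $|I|\leq |I_{p_ip_j}(\Z_n)|$, which by Proposition~\ref{pr_explicit-formula-size-Id} equals $(p_i-1)(p_j-1)p_r$, where $r$ is the remaining index.

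It remains to maximize this over the three choices of pair. Writing each value as $n(1-1/p_i)(1-1/p_j)$, the comparison already made in the proof of Theorem~\ref{th_ind-num-cyclic-group} shows the maximum is attained at $\{i,j\}=\{2,3\}$. This gives $p_1(p_2-1)(p_3-1)$, and equality follows. The degenerate case $|I|=1$ is trivially within the bound.

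The only real obstacle is the observation that distinct semiprime orders cannot coexist in $I$; everything else is bookkeeping with earlier results.
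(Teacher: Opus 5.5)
Your proof is correct, and it takes a more self-contained route than the paper.

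The paper imports, from earlier work of the first two authors, a decomposition of $\Theta(\mathbb{Z}_n)$ as an $H$-join. The clique part is $K_{p_1+p_2+p_3-2}$ (elements of order $1$ or a prime), and the four empty parts are the elements of order $p_1p_2$, $p_2p_3$, $p_1p_3$ and $n$. It then reads off the maximal independent sets $\{2,5\},\{3,5\},\{4,5\}$ of the five-vertex index graph.

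You instead derive the two adjacency facts that matter directly from the gcd rule:
\begin{enumerate}
\item[(a)] vertices of order $1$ or a prime are dominating (Remark~\ref{re_dominating-vertex});
\item[(b)] elements of distinct semiprime orders are adjacent.
\end{enumerate}
From these you conclude that every independent set with at least two elements lies in some $I_{p_ip_j}(\mathbb{Z}_n)$, and you take its size from Proposition~\ref{pr_explicit-formula-size-Id}.

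The combinatorial content is the same: your three sets $I_{p_ip_j}(\mathbb{Z}_n)$ are exactly the paper's three unions of parts. Your version, however, needs no external structure theorem. It also shows directly that every maximum independent set is one of the sets $I_d$ of Lemma~\ref{le_Id-maximal-set}, i.e.\ that the lower bound of Theorem~\ref{th_ind-num-cyclic-group} is attained here. The paper's version, in exchange, follows a uniform template that it reuses for $n=pq^b$ and $n=p^aq^b$.

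Your key step depends on the fact that, for $n=p_1p_2p_3$, the only composite orders are the three semiprimes and $n$. With four or more prime factors this fails, as the $\mathbb{Z}_{1155}$ example shows.
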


\begin{proof}[\textbf{Proof}]
By \cite[Proposition~3.5]{shubh-arxiv25}, we have $\Theta(\mathbb{Z}_{n})\cong H(\mathcal{G})$, where 
\[\mathcal{G} = \bigl\{K_{p_1+p_2+p_3-2}, E_{(p_1-1)(p_2-1)}, E_{(p_2-1)(p_3-1)}, E_{(p_1-1)(p_3-1)}, E_{(p_1-1)(p_2-1)(p_3-1)}\bigr\}\] and the indexing graph $H$, depicted in Figure~3 of \cite{shubh-arxiv25}, is reproduced in Figure~\ref{graph-H-of-cyclic-pqr}.  

	\begin{figure}[h]
	\begin{tikzpicture}
		
		\draw[fill=black] (0,0) circle (1.4pt);
		\draw[fill=black] (0.75,0) circle (1.4pt);
		\draw[fill=black] (-0.75,0.75) circle (1.4pt);
		\draw[fill=black] (-0.75,-0.75) circle (1.4pt);
		\draw[fill=black] (1.5,0) circle (1.4pt);
		
		\draw[line width=0.6 pt] (0,0)--(0.75,0)--(-0.75,0.75)--(-0.75,-0.75)--(0.75,0);
		\draw[line width=0.6 pt] (-0.75,0.75)--(0,0)--(-0.75,-0.75);
		\draw[line width=0.6 pt] (0.75,0)--(1.5,0);

		\node at (0.85,0.2)  {$1$};
		\node at (-0.25,0) {$2$};
		\node at (-0.9,0.75) {$3$};
		\node at (-0.9,-0.75) {$4$};
		\node at (1.5,0.2) {$5$};
	\end{tikzpicture}
	\vspace{-3mm}
	\caption{Graph $H$}	\label{graph-H-of-cyclic-pqr}
\end{figure}
Observe that $\{1,2,3,4\}$ forms a maximum clique in $H$, so any independent set of $H$ can contain at most one vertex from this clique. Moreover, vertex $1$ is dominating in $H$. Consequently, the maximal independent sets of cardinality greater than one in $H$ are  precisely $\{2,5\}$, $\{3,5\}$, and $\{4,5\}$. It follows that the maximal independent sets of cardinality greater than one in  $H(\mathcal{G})$ are precisely $V\bigl(E_{(p_1-1)(p_2-1)} \sqcup E_{(p_1-1)(p_2-1)(p_3-1)}\bigr)$,
$V\bigl(E_{(p_2-1)(p_3-1)}\sqcup E_{(p_1-1)(p_2-1)(p_3-1)}\bigr)$, and $V\bigl(E_{(p_1-1)(p_3-1)}\sqcup E_{(p_1-1)(p_2-1)(p_3-1)}\bigr)$ with respective cardinalities $p_3(p_1-1)(p_2-1)$, $p_1(p_2-1)(p_3-1)$, and $p_2(p_1-1)(p_3-1)$.
Therefore,
\begin{align*}
	\alpha\bigl(H(\mathcal{G})\bigr)
	&=\max\bigl\{p_1(p_2-1)(p_3-1),\; p_2(p_1-1)(p_3-1),\; p_3(p_1-1)(p_2-1)\bigr\}\\
	&= n\cdot\max_{1\leq i < j \leq 3}\biggl\{\left(1- \frac{1}{p_i}\right)\left(1- \frac{1}{p_j}\right) \biggr\}.
\end{align*}
Since $p_1 < p_2 < p_3$, we have $1-\frac{1}{p_1} <  1-\frac{1}{p_2} <1-\frac{1}{p_3}$. It follows that
\[\max_{1\leq i < j \leq 3}\biggl\{\left(1- \frac{1}{p_i}\right)\left(1- \frac{1}{p_j}\right)\biggr\} =  \left(1- \frac{1}{p_{2}}\right) \left(1- \frac{1}{p_3}\right).\]
Thus, $\alpha\bigl(H(\mathcal{G})\bigr) = n \left(1- \frac{1}{p_{2}}\right) \left(1- \frac{1}{p_3}\right)
= p_1(p_2-1)(p_3-1)$. Hence, since $\Theta(\mathbb{Z}_{n})\cong H(\mathcal{G})$, we conclude that $\alpha\bigl(\Theta(\mathbb{Z}_{n})\bigr)= p_1(p_2-1)(p_3-1)$.
\end{proof}

\begin{proposition}\label{pr_ind-no-cyclic-grup-order-pqb}
Let $n=pq^{b}$, where $p$ and $q$ are distinct primes and $b\geq 2$. Then \[\alpha\bigl(\Theta(Z_n)\bigr)= n- \min\bigl\{pq,\; q^{b}+p-1\bigr\}.\]
\end{proposition}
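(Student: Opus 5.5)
Our plan is to partition $\Z_n$ by element orders and reduce the problem to a small weighted graph. The divisors of $n=pq^b$ are $p^{\epsilon}q^{s}$ with $\epsilon\in\{0,1\}$ and $0\le s\le b$. Two elements are adjacent exactly when the gcd of their orders is $1$ or prime, so adjacency depends only on the orders. We group the elements of composite order into three classes:
\[
A=\{g: |g|=q^s,\ 2\le s\le b\},\qquad B=\{g: |g|=pq\},\qquad C=\{g: |g|=pq^s,\ 2\le s\le b\}.
\]
By Theorem~\ref{th_phi-d-elems-in-Zn} and Theorem~\ref{th_value-of-phi-n}, these have sizes
\[
|A|=q^b-q,\qquad |B|=(p-1)(q-1),\qquad |C|=(p-1)(q^b-q).
\]
The remaining elements form $P(\Z_n)$, which has $1+(p-1)+(q-1)=p+q-1$ elements. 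By Remark~\ref{re_dominating-vertex}, each of these is a dominating vertex, so any independent set with at least two vertices avoids $P(\Z_n)$.

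Next we read off the adjacencies among $A$, $B$ and $C$:
\begin{itemize}
\item Within each class the gcd of any two orders is composite, so $A$, $B$ and $C$ are each independent.
\item Between $A$ and $C$ the gcd is $q^{\min(s,s')}$ with both exponents $\ge 2$, which is composite, so there are no edges.
\item Between $B$ and $C$ the gcd is $pq$, so there are no edges.
\item Between $A$ and $B$ the gcd is $(q^s,pq)=q$, a prime, so $A\sim B$ completely.
\end{itemize}
Hence $\Theta(\Z_n)-P(\Z_n)$ is the $H$-join of $E_{|A|}$, $E_{|B|}$ and $E_{|C|}$, where $H$ is the path with the single edge $A$--$B$ and $C$ isolated.

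Since $A\sim B$ completely, an independent set of size at least $2$ lies inside $A\cup C$ or inside $B\cup C$, and both of these sets are independent. Therefore
\[
\alpha=\max\{|A|+|C|,\ |B|+|C|\}=\max\{\,n-|P|-|B|,\ n-|P|-|A|\,\}=n-(p+q-1)-\min\{|A|,|B|\}.
\]
Substituting the sizes gives
\[
(p+q-1)+(p-1)(q-1)=pq \quad\text{and}\quad (p+q-1)+(q^b-q)=q^b+p-1,
\]
which yields $\alpha=n-\min\{pq,\ q^b+p-1\}$. Both candidate sets have size at least $2$, so they beat any singleton and the formula holds as stated.

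The only delicate step is the adjacency table, in particular checking that $A$ and $C$ are non-adjacent because the exponent is at least $2$ on both sides. This is exactly why $b\ge 2$ is needed; the case $s=1$ is the class $B$, which is handled separately. The rest is bookkeeping. It is also worth noting that $A\cup C=I_{q^2}(\Z_n)$ and $B\cup C=I_{pq}(\Z_n)$, so the answer coincides with the lower bound of Theorem~\ref{th_ind-num-cyclic-group}, which confirms tightness.
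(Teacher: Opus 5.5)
Your proof is correct and follows essentially the same route as the paper. Your partition into $P(\Z_n)$, $A$, $B$, $C$ with the single complete adjacency $A\sim B$ is exactly the $H$-join decomposition that the paper imports from earlier work, and your two independent sets $A\cup C$ and $B\cup C$ are the paper's $\{2,4\}$ and $\{3,4\}$; the one difference is that you verify the adjacency table directly instead of citing it, which makes your argument self-contained.
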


\begin{proof}[\textbf{Proof}]
By \cite[Proposition~3.3]{shubh-arxiv25}, we have $\Theta(\Z_n)\cong H(\mathcal{G})$, where \[\mathcal{G} = \bigl\{ K_{p+q-1},\; E_{q^{b}-q},\; E_{(p-1)(q-1)},\; E_{(p-1)(q^{b}-q)} \bigr\}\]
and the indexing graph $H$, depicted in Figure~1 of \cite{shubh-arxiv25}, is reproduced in Figure~\ref{graph-H-of-cyclic-pqm}.  

\begin{figure}[h]
	\begin{tikzpicture}
		\draw[fill=black] (0,0) circle (1.4pt);
		\draw[fill=black] (-0.7,0) circle (1.4pt);
		\draw[fill=black] (0.8,0.4) circle (1.4pt);
		\draw[fill=black] (0.8,-0.4) circle (1.4pt);
		
		\draw[line width=0.6 pt] (-0.7,0) -- (0,0) -- (0.8,0.4)--(0.8, -0.4)--(0,0);
		
		\node at (0,0.2) {$1$};
		\node at (0.95, 0.4) {$2$};
		\node at (0.95, -0.4) {$3$};
		\node at (-0.7, 0.2) {$4$};
	\end{tikzpicture}	
	\caption{Graph $H$}	\label{graph-H-of-cyclic-pqm}
\end{figure}

Observe that $\{1,2,3\}$ forms a maximum clique in $H$, so any independent set of $H$ can contain at most one vertex from this clique. Moreover, vertex $1$ is dominating in $H$. Consequently, the maximal independent sets of cardinality greater than one in $H$ are  precisely $\{2,4\}$ and $\{3,4\}$. It follows that the maximal independent sets of cardinality greater than one in $H(\mathcal{G})$ are precisely
$V\bigl(E_{q^{b}-q} \sqcup E_{(p-1)(q^{b}-q)}\bigr)$ and $V\bigl(E_{(p-1)(q-1)} \sqcup E_{(p-1)(q^{b}-q)}\bigr)$ with respective cardinalities $n-pq$ and $n-(q^{b}+p-1)$.
Therefore,
\[\alpha\bigl(H(\mathcal{G})\bigr)
=\max\bigl\{n-pq,\; n-(q^{b}+p-1)\bigr\} = n- \min\bigl\{pq,\; q^{b}+p-1\bigr\}.\]
Hence, since $\Theta(\mathbb{Z}_{n})\cong H(\mathcal{G})$, we conclude
$\alpha\bigl(\Theta(\Z_n)\bigr) = n- \min\bigl\{pq,\; q^{b}+p-1\bigr\}$.
\end{proof}

\begin{proposition}
Let $n = p^{a}q^{b}$, where $p$ and $q$ are distinct primes and $a, b \geq 2$. Then
\[\alpha\bigl(\Theta(\Z_n)\bigr)=n-\min\bigl\{p^{a}q,\ pq^{b},\ p^{a}+q^{b}-1\bigr\}.\]
\end{proposition}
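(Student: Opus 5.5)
Following the pattern of Proposition~\ref{pr_ind-no-cyclic-grup-order-pqb}, I will partition $\Z_n$ according to element orders and describe $\Theta(\Z_n)$ as an $H$-join. Take $n=p^aq^b$ with $a,b\geq 2$, and consider the classes
\begin{align*}
V_1 &= P(\Z_n), \quad |V_1| = p+q-1,\\
V_2 &= \{g \colon |g|=p^s,\ 2\le s\le a\}, \quad |V_2| = p^a-p,\\
V_3 &= \{g \colon |g|=q^m,\ 2\le m\le b\}, \quad |V_3| = q^b-q,\\
V_4 &= \{g \colon |g|=pq\}, \quad |V_4| = (p-1)(q-1),\\
V_5 &= \{g \colon |g|=p^sq,\ s\ge 2\}, \quad |V_5| = (p^a-p)(q-1),\\
V_6 &= \{g \colon |g|=pq^m,\ m\ge 2\}, \quad |V_6| = (p-1)(q^b-q),\\
V_7 &= \{g \colon |g|=p^sq^m,\ s,m\ge 2\}, \quad |V_7| = (p^a-p)(q^b-q).
\end{align*}
These cardinalities follow from Theorems~\ref{th_phi-d-elems-in-Zn} and \ref{th_value-of-phi-n}, using telescoping sums of $\varphi$ as in the proof of Proposition~\ref{pr_explicit-formula-size-Id}. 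For each class I will check whether it is a clique or an independent set, and determine adjacency between classes by computing the gcd of the orders. $V_1$ is a clique of dominating vertices (Remark~\ref{re_dominating-vertex}). Every other class is independent, since any two of its elements have a composite gcd of orders.

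For the remaining adjacencies, two classes are joined exactly when the gcd of orders is $1$ or prime. This gives the edges $V_2V_3$, $V_2V_4$, $V_2V_6$, $V_3V_4$, $V_3V_5$, $V_4V_5$ (gcd $pq$? no) — more carefully: $V_4$ and $V_5$ have gcd $pq$, so they are nonadjacent. A pair is nonadjacent exactly when the orders share $p^2$, share $q^2$, or share $pq$. Encoding each class by its ``type'', I will list the edges of the indexing graph $H$ on $\{1,\dots,7\}$. The adjacent pairs are $\{2,3\}$, $\{2,4\}$, $\{2,6\}$, $\{3,4\}$, $\{3,5\}$, together with vertex $1$ adjacent to everything. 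All pairs among $\{4,5,6,7\}$ are nonadjacent, as are $\{2,5\}$, $\{2,7\}$, $\{3,6\}$, $\{3,7\}$.

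Next I will determine the maximal independent sets of $H$. A maximal independent set of size greater than one avoids $1$ and contains at most one of the mutually adjacent vertices $2,3,4$. The candidates are then:
\begin{align*}
&\{4,5,6,7\}, \text{ of weight } n-(p+q-1)-(p^a-p)-(q^b-q) = n-(p^a+q^b-1),\\
&\{2,5,7\}, \text{ of weight } |I_{p^2}(\Z_n)| = n-pq^b,\\
&\{3,6,7\}, \text{ of weight } |I_{q^2}(\Z_n)| = n-p^aq.
\end{align*}
Any set containing $2$ cannot contain $3$, $4$ or $6$, so it lies inside $\{2,5,7\}$. Symmetrically, any set containing $3$ lies inside $\{3,6,7\}$, and any set avoiding $1,2,3$ lies inside $\{4,5,6,7\}$. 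Consequently $\alpha$ is the maximum of these three weights, which gives $n-\min\{p^aq,\,pq^b,\,p^a+q^b-1\}$. The weight of $\{4,5,6,7\}$ can be cross-checked against Proposition~\ref{pr_explicit-formula-size-Id}: it equals $|I_{pq}|=(p^a-1)(q^b-1)$, and indeed $n-p^a-q^b+1=(p^a-1)(q^b-1)$.

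The main obstacle is getting the adjacency table of $H$ exactly right; for instance, $V_2$ versus $V_6$ has gcd $p$ and is therefore adjacent. I will verify each pair explicitly by its gcd type before performing the case analysis of independent sets. An alternative, cleaner route is to observe directly that any independent set avoiding $P(\Z_n)$ lies in one of $I_{p^2}$, $I_{q^2}$, $I_{pq}$, plus possibly isolated extras; I will check that there are none. Then the upper bound matches Proposition~\ref{pr_lower-bound-for-ind-num}.
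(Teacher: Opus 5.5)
Your proposal is correct and follows the paper's proof: you use the same seven-class $H$-join decomposition, the same indexing graph $H$ (vertex $1$ dominating, plus edges $23,24,26,34,35$), and the same three maximal independent sets $\{2,5,7\}$, $\{3,6,7\}$, $\{4,5,6,7\}$, with weights $n-pq^{b}$, $n-p^{a}q$ and $n-(p^{a}+q^{b}-1)$. The only difference is that you derive the join structure directly from the gcds of element orders rather than importing it from \cite[Proposition~3.4]{shubh-arxiv25}, and your final adjacency table agrees with the paper's Figure after your self-correction about $V_4V_5$.
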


\begin{proof}[\textbf{Proof}]
By \cite[Proposition~3.4]{shubh-arxiv25}, we have $\Theta(\Z_n)\cong H(\mathcal{G})$, where 
\[\mathcal{G} = \bigl\{ K_{p+q-1}, E_{p^{a}-p}, E_{q^{b}-q}, E_{(p-1)(q-1)}, E_{(p^{a}-p)(q-1)}, E_{(p-1)(q^{b}-q)}, E_{(p^{a}-p)(q^{b}-q)} \bigr\}\]
and the indexing graph $H$, depicted in Figure~2 of \cite{shubh-arxiv25}, is reproduced in Figure~\ref{graph-H-of-cyclic-pmqn}. 

\begin{figure}[h!]
	\begin{tikzpicture}[very thick]
		
		\draw[fill=black] (0,0) circle (1.4pt);
		\draw[fill=black] (2,-0.75) circle (1.4pt);
		\draw[fill=black] (2,0.75) circle (1.4pt);
		\draw[fill=black] (1,0) circle (1.4pt);
		\draw[fill=black] (-1,0.75) circle (1.4pt);
		\draw[fill=black] (-1,-0.75) circle (1.4pt);
		\draw[fill=black] (-1,0) circle (1.4pt);
		\draw[line width=0.6 pt] (0,0)--(-1,-0.75)--(2,-0.75)--(2,0.75)--(-1,0.75)--(0,0);
		\draw[line width=0.6 pt] (-1,0)--(0,0)--(1,0);
		\draw[line width=0.6 pt] (0,0)--(2,-0.75)--(1,0)--(2,0.75)--(0,0);

		\node at (0,0.25)  {$1$};
		\node at (2.2,-0.75) {$2$};
		\node at (2.2,0.75) {$3$};
		\node at (1.3,0) {$4$};
		\node at (-1.2,0.75)  {$5$};
		\node at (-1.2,-0.75) {$6$};
		\node at (-1.2,0) {$7$};
	\end{tikzpicture}
	\vspace{-3mm}
	\caption{Graph $H$}	\label{graph-H-of-cyclic-pmqn}
\end{figure}

Observe that $\{1,2,3,4\}$ forms a maximum clique in $H$, so any independent set of $H$ can contain at most one vertex from this clique. Moreover, vertex $1$ is dominating in $H$. Consequently, the maximal independent sets of cardinality greater than one in $H$ are precisely $\{2,5,7\}$, $\{3,6,7\}$, and $\{4, 5, 6, 7\}$. It follows that the maximal independent sets of cardinality greater than one in $H(\mathcal{G})$ are precisely $V\bigl(E_{p^{a}-p} \sqcup E_{(p^{a}-p)(q-1)}\sqcup E_{(p^{a}-p)(q^{b}-q)}\bigr)$, $V\bigl(E_{q^{b}-q} \sqcup E_{(q^{b}-q)(p-1)}\sqcup E_{(p^{a}-p)(q^{b}-q)}\bigr)$, and $V\bigl(E_{(p-1)(q-1)}\sqcup E_{(p^{a}-p)(q-1)}\sqcup E_{(p-1)(q^{b}-q)}\sqcup E_{(p^{a}-p)(q^{b}-q)}\bigr)$ with respective cardinalities $n - pq^{b}$, $n- p^{a}q$, and $n - (p^{a}+q^{b}-1)$. 
Therefore,
\[\alpha\bigl(H(\mathcal{G})\bigr)
=\max\bigl\{n - pq^{b},\ n- p^{a}q,\ n - (p^{a}+q^{b}-1)\bigr\} = n-\min\bigl\{p^{a}q,\; pq^{b},\; p^{a}+q^{b}-1\bigr\}.\]
Hence, since $\Theta(\mathbb{Z}_{n})\cong H(\mathcal{G})$, we obtain
$\alpha\bigl(\Theta(\Z_n)\bigr)= n-\min\bigl\{p^{a}q,\ pq^{b},\ p^{a}+q^{b}-1\bigr\}$.
\end{proof}


We now turn to determining the independence number of $\Theta(D_{2n})$, where $D_{2n}$ denotes the dihedral group of order $2n$. For each integer $n\geq 3$, recall that the dihedral group $D_{2n}$ is a finite nonabelian group of order $2n$ defined by the presentation \[D_{2n} = \bigl\langle a, b \ \colon \ a^n =1,\  b^2=1, \ bab^{-1}= a^{-1}\bigr\rangle,\] where $1$ denotes the identity element of $D_{2n}$ (cf.~\cite[p.~181]{james-b01}). 

\vspace{0.5mm}

The following proposition establishes the independence number of $\Theta(D_{2n})$ for any integer $n\geq 3$.

\begin{proposition}
For every integer $n\geq 3$,  $\alpha\bigl(\Theta(D_{2n})\bigr)=\alpha\bigl(\Theta(\Z_n)\bigr)$. 
\end{proposition}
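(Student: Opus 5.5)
The plan is to decompose $D_{2n}$ as $\langle a\rangle \sqcup \{a^ib \colon 0\le i<n\}$, where $\langle a\rangle\cong \Z_n$, and to observe that every reflection $a^ib$ has order $2$. Since $2$ is a prime dividing $|D_{2n}|=2n$, every reflection lies in $P(D_{2n})$. By Remark~\ref{re_dominating-vertex}, each reflection is therefore a dominating vertex of $\Theta(D_{2n})$.

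Next, the induced subgraph $\Theta(D_{2n})[\langle a\rangle]$ coincides with $\Theta(\Z_n)$. Adjacency in $\Theta$ depends only on the orders of the two elements, and orders in the subgroup $\langle a\rangle$ are the same as in $\Z_n$ under the isomorphism $a\mapsto 1$. The one point to check is that "prime" in the adjacency condition means an arbitrary prime, not a prime from a restricted set, so the condition is intrinsic to the pair of orders. This holds by the definition: the gcd is $1$ or a prime.

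Then compute $\alpha$. Suppose $I$ is an independent set of $\Theta(D_{2n})$ containing a reflection. Since that reflection is dominating, $I$ is a singleton, so $|I|=1\le \alpha(\Theta(\Z_n))$. Otherwise $I\subseteq\langle a\rangle$ and is independent in $\Theta(\Z_n)$. This gives $\alpha(\Theta(D_{2n}))\le\alpha(\Theta(\Z_n))$. Conversely, any independent set of $\Theta(\Z_n)$, transported into $\langle a\rangle$, stays independent in $\Theta(D_{2n})$, because $\Theta(D_{2n})[\langle a\rangle]\cong\Theta(\Z_n)$. This gives equality.

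There is no real obstacle. The only care needed is the degenerate situation where $\Theta(\Z_n)$ is complete, for instance when $n$ is prime, and there the value $1$ is consistent with the singleton case above.
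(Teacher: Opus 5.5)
Your proof is correct and is essentially the paper's argument. The paper cites the join decomposition $\Theta(D_{2n})\cong\Theta(\Z_n)\vee K_n$ from the authors' earlier work and takes $\alpha$ of a join as the maximum over its two parts; you verify that decomposition directly (reflections have order $2$, so they are dominating by Remark~\ref{re_dominating-vertex}, and $\langle a\rangle$ induces $\Theta(\Z_n)$) and then make the same two-case comparison.
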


\begin{proof}[\textbf{Proof}]
By \cite[Lemma~5.2]{shubh-arxiv25}, we have $\Theta(D_{2n})\cong\Theta(\mathbb{Z}_n)\vee K_n$. Note that $\alpha(K_n)= 1$. Hence,
$\alpha\bigl(\Theta(D_{2n})\bigr)= \alpha\bigl(\Theta(\Z_n)\vee K_n\bigr) = \max\left\{\alpha\bigl(\Theta(\Z_n)\bigr),\ \alpha(K_n)\right\} = \alpha\bigl(\Theta(\Z_n)\bigr)$.
\end{proof}


We next turn to determining the independence number of $\Theta(Q_{4n})$, where $Q_{4n}$ denotes the dicyclic group of order $4n$. For each integer $n\geq 3$, recall that the dicyclic group $Q_{4n}$ is a nonabelian group of order $4n$ defined by the presentation \[Q_{4n} = \bigl\langle a, b \ \colon \ a^{2n} = 1,\ b^2 = a^n,\ bab^{-1} = a^{-1}\rangle,\] where $1$ denotes the identity element of $Q_{n}$ (cf.~\cite[p~178]{james-b01}). The elements of $Q_{4n}$ can be partitioned into two disjoint subsets of equal size as follows:
\[\langle a \rangle = \bigl\{1,a,a^2, \ldots, a^{2n-1}\bigr\} \quad \text{ and } \quad Q_{4n}\setminus \langle a \rangle = \bigl\{b,ab,a^2b, \ldots, a^{2n-1}b\bigr\}.\]
Moreover, $|a^ib| = 4$ for all $i\in \{0,1,\ldots, 2n-1\}$.

\vspace{0.5mm}

The following proposition establishes the independence number of $\Theta(Q_{4n})$ for all odd integers $n\geq 3$.

\begin{proposition}
For every odd integer $n\geq 3$, $\alpha\bigl(\Theta(Q_{4n})\bigr)=2n$. 
\end{proposition}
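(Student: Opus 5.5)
The plan is to show that $2n$ is both a lower and an upper bound. The key input is the order structure of $Q_{4n}$ recalled just before the statement: $Q_{4n}=\langle a\rangle\sqcup\bigl(Q_{4n}\setminus\langle a\rangle\bigr)$, where $|\langle a\rangle|=2n$ and every element $a^ib$ has order $4$. Since $n$ is odd, $2n$ is not divisible by $4$. Hence every element of $\langle a\rangle$ has order dividing $2n$, and this order has at most one factor of $2$.

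\textbf{Lower bound.} Take $d=4\in SP(4n)$ and consider $I_4(Q_{4n})$. The elements of $\langle a\rangle$ have orders not divisible by $4$, while every element of $Q_{4n}\setminus\langle a\rangle$ has order $4$. Therefore $I_4(Q_{4n})=Q_{4n}\setminus\langle a\rangle$, which has $2n$ elements. Proposition~\ref{pr_lower-bound-for-ind-num} (or Lemma~\ref{le_Id-maximal-set} directly) then gives $\alpha\bigl(\Theta(Q_{4n})\bigr)\geq 2n$.

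\textbf{Upper bound.} Let $S$ be any independent set of $\Theta(Q_{4n})$. First suppose $S$ contains some element $x=a^ib$. For any $y\in\langle a\rangle$ we have $(|x|,|y|)=(4,|y|)\in\{1,2\}$, because $4\nmid 2n$. So $x$ is adjacent to every element of $\langle a\rangle$. Thus $S\subseteq Q_{4n}\setminus\langle a\rangle$, and $|S|\leq 2n$. Otherwise $S\subseteq\langle a\rangle$, so again $|S|\leq 2n$. In both cases $|S|\le 2n$, which yields equality.

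I do not expect a real obstacle here. The only point needing care is the observation that $n$ odd forces $4\nmid|y|$ for all $y\in\langle a\rangle$. This is exactly what makes every element of order $4$ adjacent to all of $\langle a\rangle$, and it is what separates any independent set into one of the two halves.
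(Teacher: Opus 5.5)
Your proof is correct and follows the paper's argument: the paper uses $4\nmid 2n$ to show that every element of $\langle a\rangle$ is adjacent to every element of $Q_{4n}\setminus\langle a\rangle$, so that $\Theta(Q_{4n})\cong\Theta(\Z_{2n})\vee E_{2n}$ and $\alpha\bigl(\Theta(Q_{4n})\bigr)=\max\bigl\{\alpha\bigl(\Theta(\Z_{2n})\bigr),2n\bigr\}=2n$. Your case split on whether $S$ meets $Q_{4n}\setminus\langle a\rangle$ is just that join formula written out, and your $I_4(Q_{4n})$ is exactly the $E_{2n}$ side.
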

\begin{proof}[\textbf{Proof}]
Assume $n\geq 3$ is odd. Let $H = \langle a \rangle$ denote the cyclic subgroup of order $2n$ in $Q_{4n}$. Clearly $H \cong \Z_{2n}$, and so $\Theta(Q_{4n})[H]\cong \Theta(\Z_{2n})$. Furthermore, observe that $\Theta(Q_{4n})[Hb]\cong E_{2n}$, since $|Hb| = 2n$ and every element of $Hb$ has order $4$. 

\vspace{0.4mm}
Now since $n$ is odd, the subgroup $H$ does not contain any element whose order is divisible by $4$. Therefore, $(4, |x|)\in \{1,2\}$ for all $x\in H$. Consequently, $\Theta(Q_{4n})[H]\sim \Theta(Q_{4n})[Hb]$, and so $\Theta(Q_{4n}) =  \Theta(Q_{4n})[H] \vee \Theta(Q_{4n})[Hb]$. Since $\Theta(Q_{4n})[H]\cong \Theta(\Z_{2n})$ and $\Theta(Q_{4n})[Hb]\cong E_{2n}$, it follows that $\Theta(Q_{4n}) \cong \Theta(\Z_{2n})\vee E_{2n}$. Hence, since $\alpha(E_{2n}) = 2n$, we obtain
\[\alpha\bigl(\Theta(Q_{4n})\bigr) = \alpha\bigl(\Theta(\Z_{2n})\vee E_{2n}\bigr) = \max\bigl\{\alpha\bigl(\Theta(\Z_{2n})\bigr),\, \alpha(E_{2n})\bigr\} = 2n.\]
\end{proof}


The following proposition establishes the independence number of $\Theta(Q_{4n})$ for all even integers $n\geq 3$.

\begin{proposition}
Let $n\geq 3$ be an even positive integer. Suppose that $n= 2^k m$, where $k\geq 1$ and $m$ is an odd positive integer. Then 
\[\alpha\bigl(\Theta(Q_{4n})\bigr)=4n-2m.\]
\end{proposition}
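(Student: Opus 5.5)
The plan is to prove matching lower and upper bounds, using the semiprime $d=4\in SP(4n)$ for the lower bound and a short case analysis for the upper bound. Note that $4\mid 4n$ always, so $d=4$ is indeed a semiprime divisor of $|Q_{4n}|=4n$.

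\textbf{Lower bound.} I would compute $|I_4(Q_{4n})|$ and invoke Proposition~\ref{pr_lower-bound-for-ind-num}.
\begin{itemize}
\item[(a)] Every element of $Q_{4n}\setminus\langle a\rangle=\{a^ib\}$ has order $4$. So all $2n$ of these elements lie in $I_4(Q_{4n})$.
\item[(b)] Inside $\langle a\rangle\cong \Z_{2n}$ with $2n=2^{k+1}m$, an element fails to have order divisible by $4$ exactly when its order divides $2m$. Since $\langle a\rangle$ is cyclic, these elements form the unique subgroup $K=\langle a^{2^k}\rangle$ of order $2m$.
\item[(c)] Hence exactly $2n-2m$ elements of $\langle a\rangle$ lie in $I_4(Q_{4n})$. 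This follows from counting via Theorem~\ref{th_phi-d-elems-in-Zn}, or simply by subtracting $|K|$.
\end{itemize}
Therefore $|I_4(Q_{4n})|=2n+(2n-2m)=4n-2m$, and Proposition~\ref{pr_lower-bound-for-ind-num} gives $\alpha\bigl(\Theta(Q_{4n})\bigr)\geq 4n-2m$.

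\textbf{Upper bound.} Let $S$ be any independent set of $\Theta(Q_{4n})$. Observe that $Q_{4n}\setminus I_4(Q_{4n})=K$, which is the set of all elements whose order is not divisible by $4$. There are two cases.
\begin{itemize}
\item[(1)] If $S\subseteq I_4(Q_{4n})$, then $|S|\leq 4n-2m$ immediately.
\item[(2)] Otherwise $S$ contains some $x\in K$. Then $|x|$ divides $2m$, so $(|x|,4)\in\{1,2\}$ and $x$ is adjacent to every element $a^ib$ of order $4$. Independence therefore forces $S\cap (Q_{4n}\setminus\langle a\rangle)=\varnothing$. Hence $S\subseteq\langle a\rangle$, and $|S|\leq 2n$. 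Since $m\leq n$, we have $2n\leq 4n-2m$.
\end{itemize}
In both cases $|S|\leq 4n-2m$, which gives equality.

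\textbf{Main obstacle.} The only delicate point is the identification in (b): the non-$I_4$ part of $Q_{4n}$ is exactly the subgroup of order $2m$ of $\langle a\rangle$. This needs two facts. First, no element outside $\langle a\rangle$ escapes $I_4$, which holds because all $a^ib$ have order $4$. Second, within the cyclic group $\langle a\rangle$, the elements of order dividing $2m$ are precisely the elements of its unique subgroup of order $2m$. Once this is in place, both bounds are short counting arguments.
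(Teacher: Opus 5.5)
Your proof is correct and follows essentially the same route as the paper. You get the lower bound from $|I_4(Q_{4n})| = 2n + (2n-2m)$, counting the complement of the order-$2m$ subgroup of $\langle a\rangle$ where the paper invokes Proposition~\ref{pr_explicit-formula-size-Id}. Your upper-bound dichotomy is the contrapositive of the paper's argument that an independent set larger than $2n$ must meet $Q_{4n}\setminus\langle a\rangle$ and hence lies inside $I_4(Q_{4n})$.
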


\begin{proof}[\textbf{Proof}]
Since $4\in SP(4n)$ and $I_4(Q_{4n})\neq \varnothing$,  by Lemma~\ref{le_Id-maximal-set} the set $I_4(Q_{4n})$ is a maximal independent set of $\Theta(Q_{4n})$. Let $H = \langle a \rangle $ denote the cyclic subgroup of order $2n$ in $Q_{4n}$. Since $n$ is even, by Proposition~\ref{pr_explicit-formula-size-Id}, we have $|I_4(Q_{4n})\cap H| = (2^{k+1}-2)m = 2n-2m$. Moreover, observe that $bH \subseteq I_4(Q_{4n})$, since every element of $bH$ has order $4$. Therefore, since $|bH| =2n$, we obtain
\[|I_4(Q_{4n})| = |I_4(Q_{4n})\cap H| + |bH|= (2n-2m)+2n= 4n-2m.\]

Finally, we show that $I_4(Q_{4n})$ is a maximum independent set of $\Theta(Q_{4n})$. Let $I$ be a maximum independent set of $\Theta(Q_{4n})$. Then $|I|\geq 4n-2m$. Since $|H|=2n$ and $4n-2m> 2n$, it follows that $I\cap bH \neq \varnothing$. Now let $x\in I$. Since $I$ is independent and the order of every element of $I\cap bH$ is $4$, we obtain $(4, |x|)= 4$. Thus $4$ divides $|x|$, which implies $x\in I_4(Q_{4n})$. Therefore $I\subseteq I_4(Q_{4n})$. Since $I$ is maximum in $\Theta(Q_{4n})$, it follows that $I=I_4(Q_{4n})$. Hence 
$\alpha\bigl(\Theta(Q_{4n})\bigr)= |I_4(Q_{4n})| = 4n-2m$.
\end{proof}

Finally, we determine the independence number of $\Theta(SD_{8n})$, where $SD_{8n}$ denotes the semidihedral group of order $8n$. For each integer $n\geq 3$, recall that the semidihedral group $SD_{8n}$ is a finite nonabelian group of order $8n$ defined by the presentation
\[SD_{8n}=\bigl\langle a,b \ \colon \ a^{4n}=1,\; b^2=1,\; bab^{-1}=a^{2n-1} \bigr\rangle,\] where $1$ denotes the identity element of $SD_{8n}$. The elements of $SD_{8n}$ can be partitioned into two disjoint subsets of equal size as follows:
\[\langle a \rangle = \bigl\{1,a,a^2, \ldots, a^{4n-1}\bigr\} \quad \text{ and } \quad SD_{8n}\setminus \langle a \rangle = \bigl\{b,ab,a^2b, \ldots, a^{4n-1}b\bigr\}.\]
Moreover, for every $i\in \{0,1,\ldots, 4n-1\}$,
\begin{equation*}
	|a^ib| =
	\begin{cases}
		2 & \text{if $i$ is even},\\
		4 & \text{if $i$ is odd}.
	\end{cases}
\end{equation*}

\begin{proposition}
Let $n\geq 3$ be an integer. Suppose that $n = 2^{k} m$, where $k \geq 0$ and $m$ is a positive odd integer. Then \[\alpha\bigl(\Theta(SD_{8n})\bigr)=6n-2m.\] 
\end{proposition}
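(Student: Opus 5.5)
The plan mirrors the argument for the dicyclic case with $n$ even, using $d=4\in SP(8n)$. Since $4$ divides $8n$ and $I_4(SD_{8n})\neq\varnothing$ (it contains $ab$), Lemma~\ref{le_Id-maximal-set} shows that $I_4(SD_{8n})$ is a maximal independent set of $\Theta(SD_{8n})$. First I compute its size. Put $H=\langle a\rangle\cong\Z_{4n}$. Since $4n=2^{k+2}m$, Proposition~\ref{pr_explicit-formula-size-Id} with $p_i=2$, $k_i=k+2\geq 2$ gives
\[|I_4(SD_{8n})\cap H| = (2^{k+2}-2)m = 4n-2m.\]
In the coset $SD_{8n}\setminus H$, the elements $a^ib$ with $i$ odd have order $4$ and those with $i$ even have order $2$. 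So exactly $2n$ of them lie in $I_4(SD_{8n})$, and
\[|I_4(SD_{8n})| = (4n-2m)+2n = 6n-2m.\]
Together with Proposition~\ref{pr_lower-bound-for-ind-num}, this gives the lower bound $\alpha\bigl(\Theta(SD_{8n})\bigr)\geq 6n-2m$.

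For the upper bound, let $I$ be a maximum independent set, so $|I|\geq 6n-2m$.

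The elements $a^ib$ with $i$ even have order $2$. They lie in $P(SD_{8n})$, so by Remark~\ref{re_dominating-vertex} they are dominating vertices. A dominating vertex can lie in an independent set of size at least $2$ only if that set is a singleton. Since $6n-2m\geq 2$ for $n\geq 3$, no such element lies in $I$. Hence
\[I\subseteq H\sqcup\{a^ib : i \text{ odd}\},\]
and the set on the right has $6n$ elements.

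Now $|H|=4n$, and we want $|I|>4n$, i.e.\ $2n>2m$. This holds when $k\geq 1$, and then $I$ must meet the order-$4$ part of the coset. In that case pick $y\in I$ with $|y|=4$. Every other $x\in I$ must satisfy $(4,|x|)$ composite, i.e.\ $4$ divides $|x|$, so $I\subseteq I_4(SD_{8n})$, and maximality forces equality.

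The main obstacle is the case $k=0$, i.e.\ $n=m$ odd. Here $6n-2m=4n$, so a counting argument no longer forces $I$ to meet the coset. The case must be handled directly. If $I$ contains an element of order $4$, the argument above gives $I\subseteq I_4(SD_{8n})$ and $|I|\leq 4n$. Otherwise $I\subseteq H$, so $|I|\leq\alpha\bigl(\Theta(\Z_{4n})\bigr)$. Bounding this by $4n$ is immediate, because $\Theta(\Z_{4n})$ contains the identity and elements of prime order, which are dominating and hence excluded from $I$; so $|I|<4n$. In both subcases $|I|\leq 4n=6n-2m$, which matches the lower bound and completes the proof.
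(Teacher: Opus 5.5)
Your proof is correct and follows the paper's argument. It uses the same maximal independent set $I_4(SD_{8n})$ of size $6n-2m$, and the same upper-bound mechanism: order-$2$ elements are dominating, and a single order-$4$ element in $I$ forces $I\subseteq I_4(SD_{8n})$. The only difference is that the paper avoids your split on $k$: it uses $|I|\geq 6n-2m\geq 4n$ together with $|P(H)|\geq 2$ to force $I\not\subseteq H$ for all $k$, which is exactly the reasoning you use in your case $k=0$.
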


\begin{proof}[\textbf{Proof}]
Since $4\in SP(8n)$ and $I_4(SD_{8n})\neq \varnothing$, by Lemma~\ref{le_Id-maximal-set} the set $I_4(SD_{8n})$ is a maximal independent set of $\Theta(SD_{8n})$. Let $H=\langle a \rangle$ denote the cyclic subgroup of order $4n$ in $SD_{8n}$. By Proposition~\ref{pr_explicit-formula-size-Id}, we obtain $|I_4(SD_{8n})\cap H|=4n-2m$. Moreover, the coset $Hb\;  \bigl(= SD_{8n}\setminus \langle a \rangle\bigr)$ contains $4n$ elements, of which $2n$ are of order $2$ and $2n$ are of order $4$. It follows that $|I_4(SD_{8n})\cap Hb|=2n$. Therefore
\[|I_4(SD_{8n})| = |I_4(SD_{8n})\cap H|+ |I_4(SD_{8n})\cap Hb| = (4n-2m) + 2n= 6n-2m.\]

Finally, we show that $I_4(SD_{8n})$ is a maximum independent set of $\Theta(SD_{8n})$. Let $I$ be a maximum independent set of $\Theta(SD_{8n})$. Then $|I| \geq 6n-2m$. Since $6n-2m \geq 4n$, it follows that $|I|\geq 4n$. We claim that $I\cap Hb\neq\varnothing$. Suppose to the contrary that $I\cap Hb=\varnothing$. Then $I \subseteq H\setminus P(H)$. Since $|H| = 4n$ and $|P(H)| \geq 2$, it follows that $|I| < 4n$, a contradiction. Hence $I\cap Hb\neq\varnothing$. Note that $Hb$ contains elements of order either $2$ or $4$. Thus, since every element of order $2$ is dominating in $\Theta(SD_{8n})$, there exists $g\in Hb\cap I$ with $|g| = 4$. Since $I$ is independent, we have $(4, |x|)=4$ for all $x\in I$. Consequently, $4$ divides $|x|$, which implies $x\in I_4(SD_{8n})$. Therefore $I \subseteq I_4(SD_{8n})$. Since $I$ is maximum, it follows that $I = I_4(SD_{8n})$. Hence, $\alpha\bigl(\Theta(SD_{8n})\bigr)= |I_4(SD_{8n})| =6n-2m$.  
\end{proof}




\end{document}